\newcommand{\subsectionruninhead}{\@startsection{subsection}{2}{0mm}
{-\baselineskip}{-0mm}{\bf\large}}
\newcommand{\subsubsectionruninhead}{\@startsection{subsubsection}{3}{0mm}
{-\baselineskip}{-0mm}{\bf\normalsize}}
\newtheorem*{theorem*}{Theorem}
\newtheorem{theoremalph}{Theorem}
\newtheorem*{proposition*}{Proposition}
\newtheorem*{corollary*}{Corollary}
\newtheorem*{claim*}{Claim}
\newtheorem*{remark*}{Remark}
\newtheorem*{problem*}{Problem}
\newtheorem{theorem}{Theorem}[section]
\newtheorem{conjecture}{Conjecture}[section]
\newtheorem{proposition}[theorem]{Proposition}
\newtheorem{lemma}[theorem]{Lemma}
\newtheorem{claim}[theorem]{Claim}
\theoremstyle{definition}
\newtheorem{definition}[theorem]{Definition}
\newtheorem{remark}[theorem]{Remark}
\numberwithin{equation}{section}
\newcommand{\supp}{\operatorname{\,Supp}}
\newcommand{\orb}{\operatorname{Orb}}
\newcommand{\sing}{\operatorname{Sing}}
\begin{document}

\title{Measures of intermediate pressures for geometric Lorenz attractors}

\author{Yi Shi\footnote{Y. Shi was partially supported by National Key R\&D Program of China (2021YFA1001900) and NSFC (12071007, 11831001, 12090015).}\, \,  
 and  
Xiaodong Wang\footnote{X. Wang was partially supported by National Key R\&D Program of China (2021YFA1001900), NSFC (12071285) and Innovation Program of Shanghai Municipal Education Commission (No. 2021-01-07-00-02-E00087).}}

\maketitle

\begin{abstract}
	Pressure measures the complexity of a dynamical system concerning a continuous observation function.
    A dynamical system is called to admit the intermediate pressure property if for any observation function, the measure theoretical pressures of all ergodic measures form an interval.
    We prove that the intermediate pressure property holds for $C^r (r\geq 2)$ generic geometric Lorenz attractors while it fails for $C^r (r\geq 2)$ dense geometric Lorenz attractors, which gives a sharp contrast. Similar results hold for $C^1$ singular hyperbolic attractors. 
\end{abstract}


\section{Introduction}
Entropy, which reflects the complexity of dynamical systems, has been proved to be the most important invariant in ergodic theory and dynamical systems since it was first introduced by Kolmogorov~\cite{Kol-58,Kol-59} in the 1950s. In his milestone work~\cite{Katok}, Katok proved that if a $C^r (r>1)$ surface diffeomorphism has positive topological entropy,  then it admits  horseshoes  whose entropies accumulate to the topological entropy. 
A classical result  is that a  horseshoe $\Lambda$ of a diffeomorphism $f$ admits the {\it intermediate entropy property}: for any non-negative constant $h$ smaller than the topological entropy $h_{\rm top}(\Lambda)$ of the horseshoe, there exists an ergodic measure $\mu$ of $f$ whose measure theoretical entropy $h_\mu(f)$ equals $h$.
As a consequence of Katok's work~\cite{Katok}, every $C^r (r>1)$ surface diffeomorphism $f$ admits the intermediate entropy property.
Katok raised the following conjecture.

\begin{conjecture}[Katok]
	For $r\geq 1$, every $C^r$ diffeomorphism of a smooth closed Riemannian manifold admits the intermediate entropy property.
\end{conjecture}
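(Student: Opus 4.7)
\medskip

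\noindent\textbf{Proof proposal.} The conjecture is open in this generality; what follows is therefore a plan rather than a proof, modeled on Katok's argument in the surface case. The plan is a two-step reduction. First, I would show that every horseshoe $\Lambda$ of $f$ admits the intermediate entropy property on $[0,\htop(\Lambda)]$. Second, I would produce, for every $\eps>0$, a horseshoe $\Lambda_\eps\subset M$ with $\htop(\Lambda_\eps)>\htop(f)-\eps$. Once both are in hand, a prescribed value $h<\htop(f)$ is realized by choosing $\eps$ small enough that $h<\htop(\Lambda_\eps)$ and applying the first step inside $\Lambda_\eps$ to obtain an ergodic measure of entropy exactly $h$.

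\medskip

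The first step is classical and purely symbolic: a horseshoe is conjugate to a mixing subshift of finite type and so satisfies specification, and I would realize any value in $[0,\htop(\Lambda)]$ as the entropy of an ergodic measure by continuously deforming an invariant subshift of finite type, or by taking convex combinations of Bernoulli/Gibbs measures on smaller subsystems and invoking density of ergodic entropies in the simplex. This step is dimension- and regularity-independent. For the second step, the variational principle supplies an ergodic $\mu$ with $h_\mu(f)>\htop(f)-\eps/2$; if $\mu$ is hyperbolic and $f$ is $C^{1+\alpha}$, Katok's horseshoe theorem then provides the required $\Lambda_\eps$ with $\htop(\Lambda_\eps)>h_\mu(f)-\eps/2$. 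On a surface this is automatic, because Ruelle's inequality applied to $f$ and $f^{-1}$ forces any ergodic measure of positive entropy to be hyperbolic, recovering Katok's original theorem.

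\medskip

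\noindent\textbf{Main obstacle.} The hard part will be the second step outside the surface $C^{1+\alpha}$ setting. In dimension $\geq 3$, a high-entropy ergodic measure may have a zero Lyapunov exponent and hence fall outside the scope of Pesin theory, so one must either perturb inside the simplex of ergodic measures toward the hyperbolic locus with negligible entropy loss, or extract horseshoes directly from nonhyperbolic measures via $C^1$ connecting techniques in the spirit of Crovisier and Gan--Yang. In regularity $r=1$, Pesin theory is unavailable and Katok's theorem fails in its original form, so one is forced to exploit additional global structure such as a dominated or partially hyperbolic splitting to substitute for nonuniform hyperbolicity. This is precisely why the present paper restricts attention to singular hyperbolic attractors: the ambient singular hyperbolic splitting supplies enough a priori hyperbolicity to implement both steps above, yielding the intermediate pressure property in that class. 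In full generality the conjecture remains an outstanding open problem.
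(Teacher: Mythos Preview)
The statement is a conjecture, and the paper does not prove it: Katok's conjecture is quoted in the introduction purely as background, followed by a list of special cases where it is known. There is therefore no ``paper's own proof'' to compare against. You recognize this yourself in the first sentence of your proposal, and your two-step plan (intermediate entropies inside horseshoes, plus horseshoe approximation of high-entropy measures) is exactly the standard strategy underlying all the partial results the paper cites, including the flow version in \cite{LSWW}. Your identification of the obstacles---possible zero exponents in dimension $\geq 3$ and the failure of Pesin theory for $r=1$---is also accurate.

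One small correction to your closing paragraph: the paper is about \emph{vector fields}, not diffeomorphisms, and its restriction to singular hyperbolic attractors is not primarily to recover the intermediate \emph{entropy} property (that was already established for all star vector fields in \cite{LSWW}, as the paper notes in the remark after Theorem~\ref{Thm:SH-attractor}). Rather, the singular hyperbolic structure is used to obtain upper semi-continuity of the entropy map via \cite{PYY} and the horseshoe approximation property from \cite{STVW}, which together drive the \emph{pressure} argument in Theorem~\ref{Thm:main-intermediate-pressure}. So the connection you draw between the conjecture and the paper's setting is a bit loose: the paper is not attacking Katok's conjecture but the stronger intermediate pressure property, and its main point is the generic/dense dichotomy for that property in the Lorenz/singular hyperbolic class.
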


Note that there are $C^0$ uniquely ergodic systems with positive entropy~\cite{HK,GW94,BCL}, thus such systems do not admit the intermediate entropy property.
Systems that Katok's conjecture holds true including: certain skew product diffeomorphisms~\cite{Sun-2010}, certain homogeneous systems~\cite{GSW}, affine transformations of nilmanifolds~\cite{HXX}, some partially hyperbolic diffeomorphisms with 1-dimensional center~\cite{Ures,YZ20} and star vector fields~\cite{LSWW}.
In particular, it was proved in~\cite{LSWW} that every horseshoe of a  vector field admits the intermediate entropy property, which gives the counterpart of the corresponding result for horseshoes of diffeomorphisms.
More progress on Katok's conjecture can be found in~\cite{Sun-10,Sun-12,Sun-2019}.

%

As a generalization of entropy, the notion of pressure was introduced by Ruelle~\cite{Ruelle} and studied in general case in Walters~\cite{Walters}. It measures the complexity of a dynamical system through adding the influence of an observation function. 
Let  $K$ be a compact metric space and $f\colon K\rightarrow K$ be a continuous transformation. 
Denote by $C(K,\mathbb{R})$ the space of all continuous functions from $K$ to $\mathbb{R}$ endowed with the  supremum norm. 
The topological pressure of $f$ (see definition in Section~\ref{Section:pressure}) is a map  
$$P_{\rm top}(f,\cdot)\colon C(K,\mathbb{R})\rightarrow\mathbb{R}\cup\{\infty\}$$ 
which has good properties relative to the structures on $C(K,\mathbb{R})$.
Denote by $\mathcal{M}_{\rm inv}(f)$ and $\mathcal{M}_{\rm erg}(f)$ respectively the space of invariant and ergodic probability measures of $f$. Given $\varphi\in C(K, \mathbb{R})$ and  $\mu\in \mathcal{M}_{\rm inv}(f)$, define the {\it measure theoretical pressure} of $\mu$ as
\[P_\mu(f,\varphi)=h_\mu(f)+\int \varphi {\rm d}\mu,\]
where $h_\mu(f)$ is the measure theoretical entropy of $\mu$.
By the variational principle~\cite[Theorem 9.10]{Wa} for pressure, one has that  
\[P_{\rm top}(f,\varphi)=\sup_{\mu\in\mathcal{M}_{\rm inv}(f)} P_\mu(f,\varphi)=\sup_{\mu\in\mathcal{M}_{\rm erg}(f)} P_\mu(f,\varphi).\]
Denote by
\[P_{\inf}(f,\varphi)=\inf_{\mu\in\mathcal{M}_{\rm inv}(f)} P_\mu(f,\varphi).\]
A dynamical system $(K,f)$ admits the {\it intermediate pressure property} if for any  $\varphi\in C(K, \mathbb{R})$ and for any $P\in \left(P_{\inf}(f,\varphi),P_{\rm top}(f,\varphi)\right)$, there exists $\mu\in \mathcal{M}_{\rm erg}(f)$ such that $P_{\mu}(f,\varphi)=P$.
\medskip

In this paper, we study the intermediate pressure problem for geometric Lorenz attractors. 
The Lorenz attractor, whose dynamics behaves in a chaotic way, was found by Lorenz~\cite{Lorenz} from an ordinary differential equation system in the study of meteorology.
Guckenheimer~\cite{guck} and Afra$\breve {\rm \i}$movi$\check{\rm c}$ {\it et al}~\cite{abs} introduced a geometric model for the strange attractor which is called the \emph{geometric Lorenz attractor} (see Definition~\ref{Def:Lorenz}) nowadays. The structure of geometric Lorenz attractor was well studied by Guckenheimer and Williams~\cite{gw,williams}.
Due to the existence of the singularity, the geometric Lorenz attractor is not hyperbolic and exhibits complicated dynamics. In the study of robustly transitive singular sets of 3-dimensional vector fields, Morales-Pacifico-Pujals~\cite{mpp} introduced the notion of singular hyperbolicity (see Definition~\ref{Def:SH}) which well describes the geometric structure of geometric Lorenz attractors. 
The notion of singular hyperbolicity was extended to higher dimension in~\cite{lgw,Me-Mo,zgw} and used to characterize the structure of chain recurrence classes which admit a homogeneous index condition of singularities for generic star vector fields in~\cite{sgw}.

In a recent joint work of X. Tian and the two authors~\cite{STW}, we characterize the space of ergodic measures of geometric Lorenz attractors. To be precise, we proved that for every integer $r\geq 2$ or $r=\infty$, the ergodic measure space is path connected with dense periodic measures for $C^r$ generic geometric Lorenz attractors while it is not connected for $C^r$ dense geometric Lorenz attractors. As an application, we study the multifractal analysis and large deviations of Gibbs measures for $C^r$ generic geometric Lorenz attractors in~\cite{STVW}, in which the main technique we apply is horseshoe approximation property rather than specification-like properties.

In this paper, we prove  that for every integer $r\geq 2$, intermediate pressure property holds for $C^r$ generic geometric Lorenz attractors  while it fails for $C^r$ dense geometric Lorenz attractors, which gives a sharp contrast.
Let $M^d$ (or $M$ for simplicity) be a $d$-dimensional closed smooth Riemannian manifold.
Denote by $\mathscr{X}^r(M)$ the space of $C^r$ vector fields on $M$ where $r\in\mathbb{N}\cup\{\infty\}$.

\begin{theoremalph}\label{Thm:Lorenz}
	For every $ r\in \mathbb{N}_{\geq 2}\cup\{\infty\}$, there exist a dense $G_\delta$ subset $\mathcal{R}^r$ and a dense subset $\mathcal{D}^r$ in $\mathscr{X}^r(M^3)$ such that
	\begin{itemize}
		 \item {\bf generic systems:} if $\Lambda$ is a geometric Lorenz attractor of $X\in \mathcal{R}^r$, then $\Lambda$ admits the  intermediate pressure property;
		 
		 \item {\bf dense systems:} if $\Lambda$ is a geometric Lorenz attractor of $X\in \mathcal{D}^r$, then $\Lambda$ does not admit the  intermediate pressure property, in other words, there exist  
		$\varphi\in C(\Lambda,\mathbb{R}) \text{~and~} P\in \left(P_{\inf}(X,\varphi),P_{\rm top}(X,\varphi)\right)$ such that
		  \[ P_{\mu}(X,\varphi)\neq P \text{~~for any~~} \mu\in \mathcal{M}_{\rm erg}(\Lambda).\]
	\end{itemize}
\end{theoremalph}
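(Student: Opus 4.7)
The plan exploits the structural results for $\mathcal{M}_{\rm erg}(\Lambda)$ established in our earlier work \cite{STW}, in each of the two cases.

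For the generic case ($X \in \mathcal{R}^r$), the strategy is horseshoe approximation combined with the intermediate pressure property for uniformly hyperbolic basic sets. Given $\varphi \in C(\Lambda,\RR)$ and $P \in (P_{\rm inf}(X,\varphi), P_{\rm top}(X,\varphi))$, pick ergodic measures $\mu_-, \mu_+$ with $P_{\mu_-}(X,\varphi) < P < P_{\mu_+}(X,\varphi)$. First I approximate each $\mu_\pm$, simultaneously in the weak-$*$ topology and in metric entropy, by an ergodic measure $\nu_\pm$ supported on a hyperbolic basic set $H_\pm \subset \Lambda$; this is provided by a Katok-type horseshoe theorem for singular hyperbolic attractors (available in $C^2$) and yields $|P_{\nu_\pm}(X,\varphi) - P_{\mu_\pm}(X,\varphi)| < \eps$. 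Second, using the generic properties from \cite{STW}, namely density of periodic measures in $\mathcal{M}_{\rm erg}(\Lambda)$ and pairwise homoclinic relatedness of hyperbolic periodic orbits inside $\Lambda$, I fuse $H_-$ and $H_+$ into a single larger hyperbolic basic set $H \subset \Lambda$ whose pressure spectrum still contains both $P_{\nu_\pm}(X,\varphi)$ up to $\eps$. Finally, the classical intermediate pressure property for the uniformly hyperbolic, transitive, locally maximal set $H$ (following from specification, via Bowen's construction) shows that $\{P_\nu(X,\varphi|_H): \nu \in \mathcal{M}_{\rm erg}(H)\}$ is a closed interval, hence contains $P$.

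For the dense case ($X \in \mathcal{D}^r$), the strategy is to construct an observable whose pressure spectrum has a gap, using the non-connectedness of $\mathcal{M}_{\rm erg}(\Lambda)$ from \cite{STW}. Specifically, I aim to produce an \emph{isolated} ergodic measure $\nu_0 \in \mathcal{M}_{\rm erg}(\Lambda)$, meaning a measure that admits a weak-$*$ neighborhood containing no other ergodic measure; natural candidates are a periodic measure on a specially inserted orbit, or the Dirac at the singularity in a configuration where nearby homoclinic connections are severed by perturbation. Given such $\nu_0$, Urysohn's lemma produces a nonnegative $\psi \in C(\Lambda, \RR)$ with $\int \psi\, d\nu_0 = 1$ and $\int \psi\, d\mu < \eta$ for every other ergodic $\mu$, with $\eta$ as small as desired. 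Setting $\varphi = c\psi$ with $c > \htop(\Lambda)/(1-\eta)$ yields $P_{\nu_0}(X,\varphi) = h_{\nu_0} + c$, while $P_\mu(X,\varphi) \leq \htop(\Lambda) + c\eta$ for every other ergodic $\mu$. Choosing $P$ in the resulting gap exhibits a value in $(P_{\rm inf}(X,\varphi), P_{\rm top}(X,\varphi))$ that is realized by no ergodic measure.

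The hardest step in the generic case is the horseshoe fusion: merging $H_-$ and $H_+$, whose pressures may lie near the extremes of $[P_{\rm inf}, P_{\rm top}]$, into a common hyperbolic basic set whose pressure spectrum still brackets both values. This requires quantitative control of both the entropy and the integral $\int \varphi\, d\nu$ along the new periodic orbits manufactured by homoclinic linking, and is where the singular hyperbolic structure outside the singularity is used most delicately; in particular, realizing $P_{\inf}$-approaching pressures inside a basic set is nontrivial because orbits of low pressure may need to shadow excursions near the singularity. In the dense case, the main difficulty is producing the isolated ergodic measure inside a geometric Lorenz attractor via a $C^r$ perturbation that simultaneously preserves singular hyperbolicity and the attractor property, so that the resulting vector field genuinely lies in $\mathcal{D}^r$.
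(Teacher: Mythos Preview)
Your dense-part strategy coincides with the paper's: the paper also takes $\nu_0=\delta_\sigma$ (isolated in $\mathcal{M}_{\rm erg}(\Lambda)$ by the dense part of~\cite{STW}), sets $\varphi$ equal to a large constant $L>4\,h_{\rm top}(\Lambda)$ on a small ball around $\sigma$ and zero outside a slightly larger ball, and computes $P_{\delta_\sigma}(X,\varphi)=L$ while $P_\mu(X,\varphi)\le L/2$ for every other ergodic $\mu$. One cosmetic remark: your ``Urysohn'' step is really a Hahn--Banach/Milman separation---an isolated ergodic measure, being an extreme point of $\mathcal{M}_{\rm inv}(\Lambda)$, lies outside the closed convex hull of the remaining ergodic measures, so a single $\psi\in C(\Lambda,\RR)$ separates.

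Your generic-part strategy, however, is genuinely different from the paper's. You propose to fuse two horseshoes into one and invoke the intermediate pressure property for uniformly hyperbolic basic sets. The paper instead runs a Baire category argument: using upper semi-continuity of the entropy map~\cite{PYY} together with the entropy-dense and horseshoe-approximation properties of~\cite{STVW}, it first shows that for each $h$ the ergodic measures of entropy exactly $h$ form a residual subset of $\{\mu:h_\mu(X)\ge h\}$, and then transfers this to pressure by a case analysis, obtaining that $\{\mu\in\mathcal{M}_{\rm erg}(\Lambda):P_\mu(X,\varphi)=P\}$ is residual in $\{\mu:\int\varphi\,d\mu\le P\le P_\mu(X,\varphi)\}$. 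Thus the paper proves a stronger conclusion (a residual set of such measures, not merely one) and never needs intermediate \emph{pressure} for horseshoes, only intermediate \emph{entropy}~\cite{LSWW}; your route is more direct but yields only existence and leans on the intermediate pressure property for \emph{flow} basic sets, which is true but less off-the-shelf than you suggest (Abramov's formula makes the reduction to the base subshift nonlinear in $\mu$). Finally, your ``hardest step'' is overstated: in the generic set from~\cite{STW} periodic measures are dense and all periodic orbits are homoclinically related, so $\mu_-$ (even if it equals $\delta_\sigma$) is weak-$*$ approximated by a periodic measure with pressure still below $P$, and the $\lambda$-lemma places $H_-$ and $H_+$ inside a common horseshoe $H$ on which $\nu_\pm$ remain ergodic---no delicate control near the singularity is required.
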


\begin{remark}
	We point out that in the dense part, the Dirac measure of the singularity is ``kicked out'' to be an isolated ergodic measure. The function $\varphi$ is chosen to be focused on the singularity, i.e. the value of $\varphi$ at the singularity is much larger than elsewhere.
\end{remark}

In $C^1$ topology, similar conclusion holds for a larger class of systems, {\it i.e.} singular hyperbolic attractors of higher dimensions.

\begin{theoremalph}\label{Thm:SH-attractor}
		There exist a dense $G_\delta$ subset $\mathcal{R}$ and a dense subset $\mathcal{D}$ in $\mathscr{X}^1(M)$ such that 
	\begin{itemize}
		\item {\bf generic systems:} if $\Lambda$ is a singular hyperbolic attractor of $X\in \mathcal{R}$, then $\Lambda$ admits the  intermediate pressure property;
		
		\item {\bf dense systems:} if $\Lambda$ is a singular hyperbolic attractor of $X\in \mathcal{D}$ with $\dim(E^{cu})=2$ where $T_\Lambda M=E^{ss}\oplus E^{cu}$ is the singular hyperbolic splitting of $\Lambda$, then $\Lambda$ does not admit the  intermediate pressure property.
	\end{itemize}
\end{theoremalph}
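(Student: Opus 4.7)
The plan is to mirror the strategy for Theorem~\ref{Thm:Lorenz}, trading the $C^r$ Pesin-type tools for $C^1$ generic machinery. The core mechanism is the same in both directions: for the generic part one uses horseshoe approximation together with the classical fact that a hyperbolic horseshoe (of a flow) enjoys the intermediate pressure property, which follows from the symbolic coding and thermodynamic formalism for subshifts of finite type, and is the pressure analogue of the horseshoe intermediate entropy result of~\cite{LSWW}; for the dense part one arranges $\delta_\sigma$ to be an isolated point of $\mathcal{M}_{\rm erg}$ and then tests against a continuous function concentrated at $\sigma$.

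For the generic part, I take $\mathcal{R}\subset\mathscr{X}^1(M)$ to be the residual set of Kupka--Smale vector fields satisfying the Bonatti--Crovisier connecting lemma for pseudo-orbits and the ergodic closing lemma, inside which every singular hyperbolic attractor $\Lambda$ is a homoclinic class, periodic measures are weak-$*$ dense in $\mathcal{M}_{\rm inv}(X|_\Lambda)$, and all hyperbolic periodic orbits of the same index in $\Lambda$ are pairwise homoclinically related. Given $X\in\mathcal{R}$, a singular hyperbolic attractor $\Lambda$, $\varphi\in C(\Lambda,\mathbb{R})$ and $P\in\bigl(P_{\inf}(X,\varphi),P_{\rm top}(X,\varphi)\bigr)$, pick two ergodic measures whose pressures lie on opposite sides of $P$. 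Using density of periodic measures and weak-$*$ continuity of $\nu\mapsto\int\varphi\,d\nu$, I produce periodic orbits $\gamma_-,\gamma_+\subset\Lambda$ with $\int\varphi\,d\delta_{\gamma_-}<P<\int\varphi\,d\delta_{\gamma_+}$ (both Dirac measures have zero entropy, so this is their pressure). Homoclinic relatedness plus Birkhoff--Smale yields a hyperbolic horseshoe $H\subset\Lambda$ containing both $\gamma_-$ and $\gamma_+$, so $[P_{\inf}(X|_H,\varphi),P_{\rm top}(X|_H,\varphi)]$ strictly contains $P$, and the horseshoe intermediate pressure property supplies an ergodic measure on $H$, hence on $\Lambda$, realizing $P$ exactly.

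For the dense part, I use the perturbative construction from~\cite{STW}, whose higher-dimensional analogue is available precisely when $\dim E^{cu}=2$: this codimension hypothesis is what enables an arbitrarily small $C^1$ perturbation inside a small neighborhood of $\sigma$ that pushes every non-singular recurrent orbit of $\Lambda$ out of a definite ball $U$ around $\sigma$, while preserving the singular hyperbolic splitting. The resulting dense set $\mathcal{D}$ has the feature that $\delta_\sigma$ is isolated in $\mathcal{M}_{\rm erg}(X|_\Lambda)$, with a uniform weak-$*$ gap: there exists $\eta>0$ with $\nu(U)\leq 1-\eta$ for every ergodic $\nu\neq\delta_\sigma$. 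Take a continuous test function $\varphi\geq 0$ supported in $U$ with $\varphi(\sigma)=C$. Then $P_{\delta_\sigma}(X,\varphi)=C$ while for every other ergodic $\nu$ we have $\int\varphi\,d\nu\leq(1-\eta)C$ and $h_\nu(X)\leq h_{\rm top}(X|_\Lambda)$, a bound independent of $C$; choosing $C$ large enough yields $P_\nu(X,\varphi)\leq C-\eta C/2$, and any $P$ strictly between these bounds is not realized by any ergodic measure.

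The main obstacle I expect is verifying, in the higher-dimensional $C^1$ setting, that the two generic features on which the first part rests---$\Lambda$ being a single homoclinic class with pairwise homoclinically related hyperbolic periodic orbits, and the resulting common horseshoe actually sitting inside $\Lambda$---are simultaneously $C^1$ generic for all singular hyperbolic attractors. Both are expected and should follow from a careful combination of the connecting lemma for pseudo-orbits with the normally hyperbolic structure of $\Lambda$, but the bookkeeping (ensuring the approximating horseshoes are not merely accumulating on $\Lambda$ but contained in it, and that the index condition on singularities does not obstruct homoclinic relatedness of periodic orbits) is where the real work lies; once these are in hand, the remaining argument is a direct transcription of the $C^r$ geometric Lorenz case in Theorem~\ref{Thm:Lorenz}.
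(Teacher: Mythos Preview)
Your dense part is correct and essentially identical to the paper's: isolate $\delta_\sigma$ via the perturbation from \cite{STW}, then test against a bump function at $\sigma$ with value large compared to $h_{\rm top}(\Lambda)$.

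Your generic part has a genuine gap, and it is not the bookkeeping issue you flag at the end. The step ``produce periodic orbits $\gamma_-,\gamma_+$ with $\int\varphi\,d\delta_{\gamma_-}<P<\int\varphi\,d\delta_{\gamma_+}$'' fails in general. Weak-$*$ approximation of $\mu_+$ by periodic measures only forces $\int\varphi\,d\delta_{\gamma_+}$ close to $\int\varphi\,d\mu_+$, not close to $P_{\mu_+}(X,\varphi)=h_{\mu_+}(X)+\int\varphi\,d\mu_+$; the entropy contribution is lost. Take $\varphi\equiv 0$: then $P_{\inf}=0$, $P_{\rm top}=h_{\rm top}(\Lambda)>0$, but every periodic measure has $\int\varphi\,d\delta_\gamma=0$, so no $\gamma_+$ with $\int\varphi\,d\delta_{\gamma_+}>P$ exists for any $P>0$. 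Your horseshoe $H$ built from $\gamma_-$ and $\gamma_+$ therefore need not satisfy $P_{\rm top}(X|_H,\varphi)>P$, and the argument collapses precisely in the regime where entropy matters---which is the whole point of pressure as opposed to integrals.

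The paper's route is different and does not reduce to a single horseshoe containing two specified periodic orbits. Following Sun \cite{Sun-2019}, it first shows (Proposition~\ref{Pro:generic-entropy}) that for each $h<h_{\rm top}(\Lambda)$ the ergodic measures with entropy exactly $h$ are residual in $\{\mu:h_\mu(X)\geq h\}$, using horseshoe approximation \cite{STVW} and the intermediate entropy property of horseshoes \cite{LSWW} for density, together with upper semi-continuity of the entropy map \cite{PYY} to make the superlevel sets closed. Then (Proposition~\ref{Prop:generic-pressure}) a case analysis, combined with convex combinations to adjust $\int\varphi\,d\mu$ and $P_\mu$ separately, reduces the pressure problem to the entropy result: one finds an ergodic $\nu$ close to $\mu$ with $h_\nu(X)$ equal to a prescribed value making $P_\nu(X,\varphi)$ land in $(P,P')$. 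A final Baire argument intersects over $P'=P+1/k$. The homoclinic-class and Property~(H) hypotheses you worry about are supplied directly by \cite{CY-Robust-attractor} and \cite{STW}, so that part needs no new work; the missing idea is the entropy-genericity mechanism above.
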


\begin{remark}
	Note that every singular hyperbolic attractor (in particular, every geometric Lorenz attractor) satisfies the star property and thus admits the intermediate entropy property by~\cite{LSWW}. In fact, every singular hyperbolic attractor is entropy expansive by~\cite{PYY} and thus admits measures of maximal entropy. As a consequence, for every singular hyperbolic attractor $\Lambda$ of a vector field $X\in\mathscr{X}^1(M)$ and every $h\in [0,h_{\rm top}(X,\Lambda)]$, there exists $\mu\in\mathcal{M}_{\rm erg}(\Lambda)$ such that $h_\mu(X)=h$. This gives a contrast between intermediate entropy property and intermediate pressure property for singular hyperbolic attractors.
\end{remark}

We point out that a strategy to verify intermediate pressure property for systems with specification-like properties was introduced by  Sun~\cite{Sun-2019} and was developed by Ji-Chen-Lin~\cite{JCL} who proved the intermediate pressure property for symbolic systems with non-uniform structure. However, the presence of  singularities constitutes an obstruction of specification-like property for geometric Lorenz attractors, see for instance~\cite{SVY15,Wen-Wen20}. To overcome this, we apply the horseshoe approximation property proved in~\cite{STVW} in the generic case.

\section{Preliminary}\label{Section:Pre}
In this section, we present some notions and known results.
\subsection{Topological pressure}\label{Section:pressure}
By a dynamical system $(K,f)$, we mean that $K$ is a compact metric space with metric $d(\cdot,\cdot)$ and $f\colon K\rightarrow K$ is a continuous transformation. Recall that we denote by $C(K,\mathbb{R})$ the space of all continuous functions from $K$ to $\mathbb{R}$ endowed with the  supremum norm $\|\cdot\|_\infty$.  
We follow~\cite[Section 9.1]{Wa} to define the topological pressure.

Given $n\in\mathbb{N}$ and $\varepsilon>0$, a subset $E$ of $K$ is called an $(n,\varepsilon)$-{\it separated set} if for any two distinct points $x, y\in E$, there exists $j\in [0,n-1]$ such that $d(f^j(x),f^j(y))>\varepsilon$.

Given a function $\varphi\in C(K,\mathbb{R})$, a point $x\in K$ and $n\in \mathbb{N}$, denote by 
\[S_n\varphi(x)=\sum_{j=0}^{n-1}\varphi(f^j(x)),\]
\[P_n(f,\varphi,\varepsilon)=\sup\left\{\sum_{x\in E}e^{S_n\varphi(x)}\colon \text{~$E$ is an $(n,\varepsilon)$-separated set}\right\}.\]
The {\it topological pressure} of $(K,f)$ with respect to $\varphi$ is defined as
\[P_{\rm top}(f,\varphi)=\lim_{\varepsilon\rightarrow 0}\limsup_{n\rightarrow\infty}\frac{1}{n}\log P_n(f,\varphi,\varepsilon).\]
In particular, the notion of pressure contains topological entropy in the sense that when considering   the $0$-function on $K$, one has $P_{\rm top}(f,0)=h_{\rm top}(f)$. 
Moreover, as we have mentioned before, by the variational principle~\cite[Theorem 9.10]{Wa} for pressure, one has that  
\[P_{\rm top}(f,\varphi)=\sup_{\mu\in\mathcal{M}_{\rm inv}(f)} P_\mu(f,\varphi)=\sup_{\mu\in\mathcal{M}_{\rm erg}(f)} P_\mu(f,\varphi),\]
where  
\[P_\mu(f,\varphi)=h_\mu(f)+\int \varphi {\rm d}\mu\]
is the  measure theoretical pressure of an invariant measure $\mu\in\mathcal{M}_{\rm inv}(f)$.
One can refer to~\cite[Chapter 9]{Wa} for more characterizations on topological pressure.

For a topological flow $\{X_t\}_{t\in\mathbb{R}}$ over $K$, its topological entropy and topological pressure are defined as the ones of its time-one map $X_1\colon K\rightarrow K$.

\subsection{Geometric Lorenz attractor and singular hyperbolicity} 

Let $M^d$ (or $M$ for simplicity) be a $d$-dimensional closed smooth Riemannian manifold.
Denote by $\mathscr{X}^r(M)$ the space of $C^r$ vector fields on $M$ where $r\in\mathbb{N}\cup\{\infty\}$. Given a vector field $X\in \mathscr{X}^r(M) $, denote by $\{X_t\}_{t\in\mathbb{R}}$ the $C^r$ flow generated by $X$ on $M$.
By an invariant set  of a vector field $X$, we mean that the set is invariant under the flow $\{X_t\}_{t\in\mathbb{R}}$.
For a point $x\in M$, denote by $\orb(x)$ its orbit under the flow $\{X_t\}_{t\in\mathbb{R}}$.
We denote by $\sing(X)$ the set of all singularities of $X$, i.e. $\sing(X)=\{\sigma\colon X(\sigma)=0\}$.

In this section, we devoted to give the definition of geometric Lorenz attractor following~\cite{guck,gw,williams}.
We first recall the notion of hyperbolicity and singular hyperbolicity.

\begin{definition}\label{Def:Hyp}
	Let $X\in \mathscr{X}^1(M)$. An invariant compact set $\Lambda$   of $X$ is \emph{hyperbolic} if there exist a continuous $DX_t$-invariant decomposition $T_\Lambda M=E^s\oplus \langle X\rangle \oplus E^u$ where $\langle X\rangle$ is the  linear space generated by $X$ and two constants  $C>1,\lambda>0$  such that for any $x\in\Lambda$ and any $t\geq 0$,
	\begin{itemize}
		\item $\|DX_t v\|< Ce^{-\lambda t}\|v\|$ for any $v\in E^s_x\setminus\{0\}$;
		
		\item $\|DX_{-t} v\|< Ce^{-\lambda t}\|v\|$ for any $v\in E^u_x\setminus\{0\}$.
	\end{itemize}
\end{definition}
Note that in Definition~\ref{Def:Hyp}, if $\Lambda$ is  a singularity $\sigma\in\sing(X)$, then $\langle X\rangle$ is reduced to $0$. In particular, if $\Lambda$ is a transitive set containing both singularities and regular points, then $\Lambda$ is not hyperbolic. To describe the structure of 3-dimensional robust transitive singular sets, Morales-Pacifico-Pujals~\cite{mpp} introduced the notion of singular hyperbolicity, which was extended to higher dimensions in~\cite{lgw,Me-Mo,zgw}. One can refer to~\cite{AP} for an overview of singular hyperbolic systems.

\begin{definition}\label{Def:SH}
	Let $X\in \mathscr{X}^1(M)$. An invariant compact set $\Lambda$   of $X$ is \emph{singular hyperbolic} if there exist a continuous $DX_t$-invariant decomposition $T_\Lambda M=E^{ss} \oplus E^{cu}$ and two constants  $C>1,\lambda>0$  such that for any $x\in\Lambda$ and any $t\geq 0$,
	\begin{itemize}
		\item $E^{ss}$ is dominated by $E^{cu}$: $\|DX_t|_{E^{ss}_x}\|\cdot \|DX_{-t}|_{E^{cu}_{X_t(x)}}\| <C e^{-\lambda t}$;
		
		\item $\|DX_t v\|< C e^{-\lambda t}\|v\|$ for any $v\in E^{ss}_x\setminus\{0\}$;
		
		\item $E^{cu}$ is sectionally expanding: $\left|{\rm det} (DX_t|_{V})\right|>C e^{\lambda t}$ for any two dimensional linear space $V\subset E^{cu}_x$.
	\end{itemize}
\end{definition} 

For a hyperbolic periodic point $p$ of a vector field $X\in\mathscr{X}^1(M)$, the classical theory of invariant manifolds by~\cite{hps} gives two submanifolds: the stable and unstable manifolds of $\orb(p)$ defined as
\[W^s(\orb(p))=\left\{x\colon \lim_{t\rightarrow+\infty}d(X_t(x),\orb(p))=0\right\};\]
\[W^u(\orb(p))=\left\{x\colon \lim_{t\rightarrow+\infty}d(X_{-t}(x),\orb(p))=0\right\}.\]
Two hyperbolic periodic points $p,q$ are called \emph{homoclinically related} if $W^s(\orb(p))$ has non-empty transverse intersections with $W^u(\orb(q))$  and vice versa. The \emph{homoclinic class} of a hyperbolic periodic point $p$, denote by $H(p)$, is the closure of the set of periodic points that are homoclinically related to $p$, equivalently, $H(p)$ is the closure of the set of transverse intersections between $W^s(\orb(q))$  and $W^u(\orb(q))$, see for instance~\cite{Wen-book}.

Now we are ready to give the definition of a geometric Lorenz attractor  following~\cite{guck,gw,williams}. First recall that an invariant compact set $\Lambda$ of a vector field $X\in\mathscr{X}^1(M)$ is an attractor if $\Lambda$ is a transitive set and there exists a neighborhood $U$ of $\Lambda$ such that $X_t(\overline U)\subset U$ for any $t>0$ and $\Lambda=\bigcap_{t\geq 0}X_t(\overline U)$. The neighborhood $U$ is called an {\it attracting neighborhood} of $\Lambda$. For a singularity $\sigma$ of a vector field $X$ and a constant $\delta>0$, the local stable manifold of $\sigma$ with size $\delta$ is defined as
\[W^s_{\delta}(\sigma)=\left\{x\colon \lim_{t\rightarrow+\infty}d(X_t(x),\sigma)=0 \text{~and~} d(X_t(x),\sigma)<\delta \text{~for $\forall t\geq 0$}\right\}.\]
We also denote by $W^s_{\rm loc}(\sigma)$ when one does not want to emphasize the size $\delta$.
The local unstable manifold $W^s_{\delta}(\sigma)$ or $W^s_{\rm loc}(\sigma)$ of $\sigma$ is defined similarly.

\begin{definition}\label{Def:Lorenz}
	Let $X\in\mathscr{X}^r(M^3), r\geq 1$. A singular hyperbolic attractor  $\Lambda$ of $X$ is a \emph{geometric Lorenz attractor} if the following properties are satisfied.
	\begin{enumerate}
		\item $\Lambda$ contains a unique singularity $\sigma$ with exponents $\lambda_1<\lambda_2<0<\lambda_3$ satisfying $\lambda_1+\lambda_3<0$ and $\lambda_2+\lambda_3>0$;
		
		\item $\Lambda$ admits a $C^r$-cross section $\Sigma$ which is $C^1$-diffeomorphic to $[-1,1]\times [-1,1]$ such that  for any point $z\in U\setminus W^s_{\rm loc}(\sigma)$, there exists $t>0$ such that $X_t(z)\in \Sigma$, where $U$ is an attracting neighborhood of $\Lambda$;
		
		\item Identifying $\Sigma=[-1,1]\times [-1,1]$, then the center line $\ell=\{0\}\times [-1,1]$ coincides with $W^s_{\rm loc}(\sigma)\cap \Sigma$ and the Poincar\'e return map $P\colon \Sigma\setminus\ell\rightarrow\Sigma$ is a $C^1$-smooth skew product 
		\[P(x,y)=(f(x,y),H(x,y)),\qquad \forall (x,y)\in \Sigma\setminus\ell\]
		with the following properties:
		\begin{itemize}
			\item $H(x,y)<0$ for $x>0$, $H(x,y)>0$ for $x<0$ and 
			\[\sup_{(x,y)\in\Sigma\setminus \ell} \max\left\{\left|\partial H(x,y)/\partial x\right|,\left|\partial H(x,y)/\partial y\right|\right\}<1\]
			
			\item the one-dimensional quotient map $f\colon [-1,1]\setminus\{0\}\rightarrow [-1,1]$ is $C^1$-smooth  satisfying
			$\lim_{x\rightarrow 0^-}f(x)=1,~~\lim_{x\rightarrow 0^+}f(x)=-1, -1<f(x)<1\text{~and~}  f'(x)>\sqrt{2}~\forall x\in [-1,1]\setminus \{0\}.$
		\end{itemize}
	\end{enumerate}
\end{definition}

\begin{figure}\label{Lorenz}
	\centering
	\includegraphics[width=10cm]{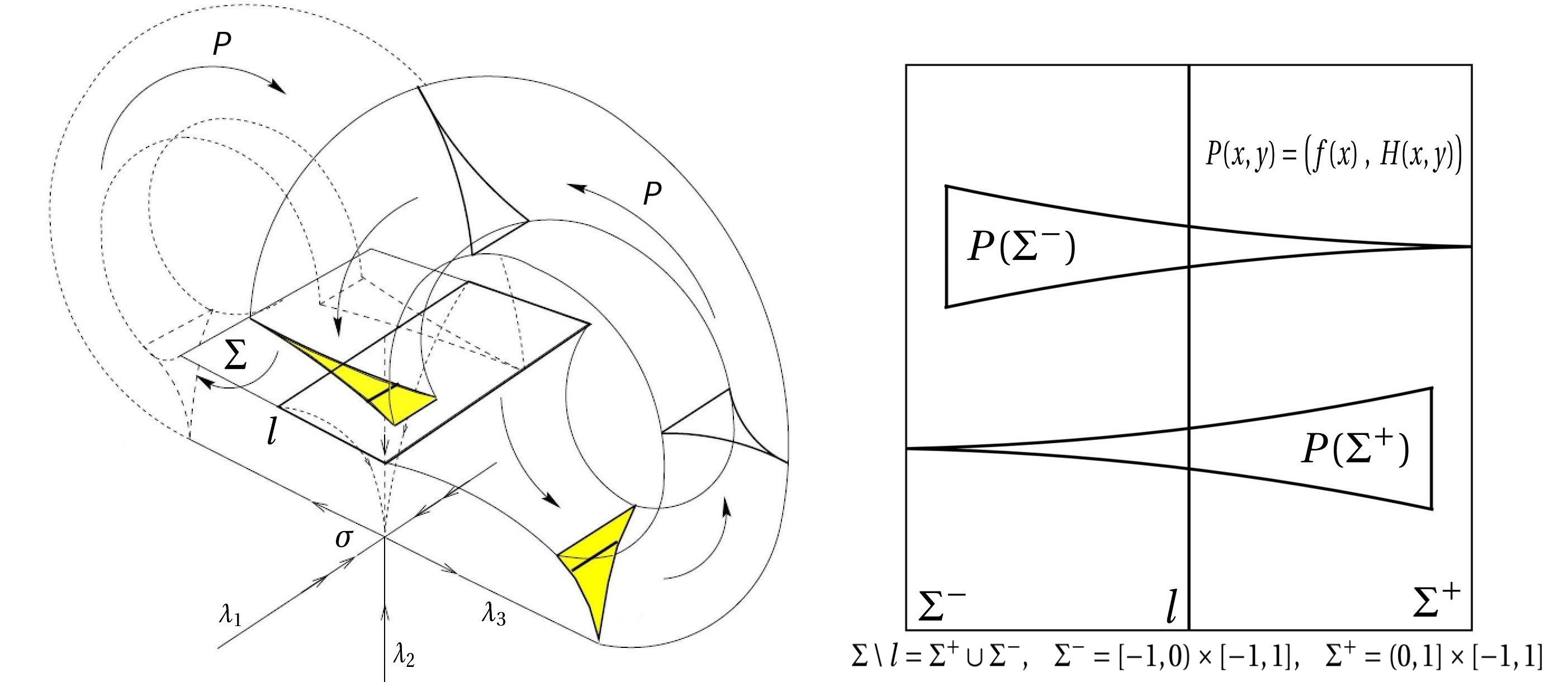}
	\caption{Geometric Lorenz attractor and the return map}
\end{figure}

A geometric Lorenz attractor is always a homoclinic class in which every two periodic points are homoclinically related~\cite[Theorem 6.8]{AP}, see also~\cite{Bau}. For every $r\in\mathbb{N}_{\geq 2}\cup\{\infty\}$, the set of vector fields $X\in\mathscr{X}^r(M^3)$ admitting a geometric Lorenz attractor forms an open set in $\mathscr{X}^r(M^3)$~\cite[Proposition 3.8]{STW}. We formulate theses properties as the following proposition.

\begin{proposition}\label{Prop:Lorenz}
	Assume $X\in \mathscr{X}^r(M^3), r\in\mathbb{N}_{\geq 2}\cup\{\infty\}$ admits a geometric Lorenz attractor $\Lambda$ with attracting neighborhood $U$. Then $\Lambda$ is a homoclinic class in which each pair of periodic points are homoclinically related. Moreover, there exists a neighborhood $\mathcal{V}$ of $X$ in $\mathscr{X}^r(M^3)$ such that for every $Y\in\mathcal{V}$, the maximal invariant compact set $\Lambda_Y=\bigcap_{t\geq 0} Y_t(U)$ is a geometric Lorenz attractor of $Y$ with $U$ being an attracting neighborhood.
\end{proposition}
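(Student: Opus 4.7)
The plan is to split Proposition~\ref{Prop:Lorenz} into its two assertions and handle each by quoting the cited results while extracting the structural reasons that make the quotation legitimate.

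For the homoclinic-class statement, I will appeal to~\cite{AP} (and the independent account of Bau). The mechanism one has to record is the following: by clause~(3) of Definition~\ref{Def:Lorenz}, the one-dimensional quotient map $f\colon [-1,1]\setminus\{0\}\rightarrow [-1,1]$ has derivative bounded below by $\sqrt{2}$, so $f$ is (piecewise) expanding and topologically mixing on $[-1,1]$; its periodic orbits are therefore dense and each can be lifted, using the uniform contraction of $H$ in the $y$-direction together with the stable foliation of $P$ by vertical segments, to a hyperbolic periodic orbit of the flow inside $\Lambda$. For two such periodic points $p,q\in\Lambda$, $W^s(\orb(p))\cap\Sigma$ contains a full vertical fiber, and any forward Poincar\'e image of a local unstable disc of $\orb(q)$ under $P$ is a nearly horizontal curve of length comparable to $2$, so they must intersect transversally; reversing the roles of $p$ and $q$ gives homoclinic relatedness. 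Density of such periodic orbits in $\Lambda$ (again via the expansion of $f$) then yields $\Lambda=H(p)$ for any hyperbolic periodic point $p\subset\Lambda$.

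For the robustness statement, the task is to verify that each clause in Definition~\ref{Def:Lorenz} is an open condition in $\mathscr{X}^r(M^3)$. I would define $\mathcal{V}$ as the intersection of four open neighborhoods of $X$: (i) the hyperbolic singularity $\sigma$ persists as $\sigma_Y$ depending continuously on $Y$, with eigenvalues $\lambda_i(Y)$ still satisfying $\lambda_1(Y)+\lambda_3(Y)<0$ and $\lambda_2(Y)+\lambda_3(Y)>0$ (open conditions on the linearization at $\sigma_Y$); (ii) $\Sigma$ remains a $C^r$ cross-section of $Y$ and every forward orbit in $U\setminus W^s_{\rm loc}(\sigma_Y)$ still hits $\Sigma$ (because the original transit times are bounded away from infinity on a compact subset of $\Sigma$); (iii) singular hyperbolicity on the maximal invariant set in $U$, which is an open property in $\mathscr{X}^1(M)$ and hence a fortiori in $\mathscr{X}^r(M^3)$; (iv) the Poincar\'e return map $P_Y$ is $C^1$-close to $P$ on $\Sigma\setminus\ell_Y$, so its skew-product form persists with $\sup\max\{|\partial H_Y/\partial x|,|\partial H_Y/\partial y|\}<1$ and the quotient $f_Y$ still satisfies $f_Y'>\sqrt{2}$. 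Together these give $\Lambda_Y=\bigcap_{t\geq 0}Y_t(U)$ is again a geometric Lorenz attractor with $U$ as attracting neighborhood, which is the content of~\cite[Proposition~3.8]{STW}.

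The main obstacle that one should not paper over is the persistence of the skew-product normal form in clause~(iv): going from $X$ to a nearby $Y$, the reduction of $P_Y$ to a skew product requires a $C^1$ smooth stable foliation for $P_Y$ on $\Sigma$. This is where the hypothesis $r\geq 2$ is essential: it is used (via the singular hyperbolic splitting on $\Lambda$ and the standard $C^r$-section theorem for dominated splittings with $r\geq 2$) to obtain $C^1$ holonomies that depend continuously on $Y$, so that the expansion condition $f_Y'>\sqrt{2}$ and the contraction bounds on $H_Y$ are open in $Y$. Everything else in the proof is an application of standard perturbation theory for hyperbolic singularities and transverse sections, and can be checked clause by clause.
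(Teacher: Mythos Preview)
The paper does not actually prove Proposition~\ref{Prop:Lorenz}: the sentence immediately preceding the statement simply records that the homoclinic-class assertion is \cite[Theorem~6.8]{AP} (see also~\cite{Bau}) and the robustness assertion is \cite[Proposition~3.8]{STW}, and then says ``We formulate these properties as the following proposition.'' So there is nothing to compare against beyond those citations.

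Your proposal goes further than the paper by unpacking the content of those references, and the sketch is essentially sound. A couple of small points to tighten: in the homoclinic-relatedness argument, a single Poincar\'e image of a local unstable disc of $\orb(q)$ is not yet a curve of length comparable to~$2$; one needs to iterate $P$ finitely many times (using $f'>\sqrt 2$) before it crosses the full horizontal extent of~$\Sigma$. Also, in item~(iv) you should make explicit that after perturbation one re-identifies $\Sigma$ with $[-1,1]^2$ via a $C^1$ change of coordinates straightening the new strong-stable foliation of $P_Y$; this is why you need the $C^1$ regularity of that foliation (and hence $r\ge 2$), and it is what legitimizes speaking of a ``quotient map $f_Y$'' at all. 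With those clarifications your outline matches what the cited sources actually do.
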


%
%
%
%

\section{Ergodic theory for singular hyperbolic homoclinic classes}

Let $X\in\mathscr{X}^1(M)$ and $\Lambda$ be a non-trivial homoclinic class of $X$.
Denote by $\mathcal{M}(\Lambda)$ the space of probability measures supported on $\Lambda$  endowed with the weak$^*$ topology and let $d^*(\cdot,\cdot)$ be a metric of $\mathcal{M}(\Lambda)$ that is coherent with the weak$^*$ topology.
Denote by $\mathcal{M}_{\rm inv}(\Lambda)$ and $\mathcal{M}_{\rm erg}(\Lambda)$ respectively the space of invariant and ergodic probability measures of $X$ supported on $\Lambda$. 
Denote by 
\[\mathcal{M}_1(\Lambda)=\left\{\mu\in\mathcal{M}_{\rm inv}(\Lambda)\colon \mu({\rm Sing}(\Lambda))=0\right\}.\]
In this section, we always assume $\Lambda$ satisfies the following property:
\medskip

\noindent{\it Property (H): all periodic orbits contained in $\Lambda$ are homoclinically related.}

\medskip
The generic part of Theorem~\ref{Thm:Lorenz}~\&~\ref{Thm:SH-attractor} follows from the following theorem  which will be proved in this section. We apply the strategy in~\cite[Section 6]{Sun-2019} where the author proves that entropy genericity together with upper semi-continuity of entropy map implies pressure genericity, see also~\cite[Section 3]{JCL} where the authors apply  similar strategy to prove intermediate pressure property for symbolic systems with non-uniform structure.

\begin{theorem}\label{Thm:main-intermediate-pressure}
	Let $X\in\mathscr{X}^1(M)$ and $\Lambda$ be a non-trivial singular hyperbolic homoclinic class of $X$ satisfying Property (H). 
	Assume moreover that $\overline{\mathcal{M}_1(\Lambda)}=\mathcal{M}_{\rm inv}(\Lambda)$. 
	Given $\varphi\in C(\Lambda,\mathbb{R})$,  for any $P\in (P_{\inf}(X,\varphi),P_{\rm top}(X,\varphi))$, the set 
	\[\mathcal{P}_{\rm erg}(X,\varphi,P)\colon=\left\{\mu\in\mathcal{M}_{\rm erg}(\Lambda)\colon P_{\mu}(X,\varphi)=P\right\}\]
	forms a residual subset of the compact metric space
	\[\mathcal{P}(X,\varphi,P,+)\colon=\left\{\mu\in\mathcal{M}_{\rm inv}(\Lambda)\colon \int \varphi {\rm d}\mu\leq P\leq P_{\mu}(X,\varphi)\right\}. \]
	As a consequence, $\Lambda$ admits the intermediate pressure property.
\end{theorem}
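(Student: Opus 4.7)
The plan is to establish two things and combine them via Baire: that $\mathcal{P}_{\rm erg}(X,\varphi,P)$ is a $G_\delta$ subset of $\mathcal{P}(X,\varphi,P,+)$, and that it is dense there. First I would verify that $\mathcal{P}(X,\varphi,P,+)$ is a nonempty, compact, convex metric space. Compactness follows because $\mu\mapsto\int\varphi\,d\mu$ is continuous, and the entropy map on a singular hyperbolic attractor is upper semicontinuous by \cite{PYY}, which makes $\mu\mapsto P_\mu(X,\varphi)$ upper semicontinuous and hence $\{P_\mu\geq P\}$ closed. Non-emptiness comes from a convex interpolation: pick $\mu_0,\mu_1\in\mathcal{M}_{\rm inv}(\Lambda)$ with $P_{\mu_0}<P<P_{\mu_1}$, which exist because $P\in(P_{\inf},P_{\rm top})$; the affine function $t\mapsto P_{(1-t)\mu_0+t\mu_1}$ hits $P$, and the corresponding convex combination $\mu_*$ automatically satisfies $\int\varphi\,d\mu_*\leq P_{\mu_*}=P$ because $h_{\mu_*}\geq 0$.

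For the $G_\delta$ property I would note that $\mathcal{M}_{\rm erg}(\Lambda)$ is always a $G_\delta$ subset of $\mathcal{M}_{\rm inv}(\Lambda)$ by a standard argument using ergodic averages of a countable dense family in $C(\Lambda,\mathbb{R})$. On $\mathcal{P}(X,\varphi,P,+)$ the condition $P_\mu=P$ is equivalent to $P_\mu\leq P$, which is $G_\delta$ as $\bigcap_n\{\mu:P_\mu<P+1/n\}$ by the upper semicontinuity of $P_\mu$. Intersecting these two $G_\delta$ sets gives $\mathcal{P}_{\rm erg}(X,\varphi,P)$ as $G_\delta$ in $\mathcal{P}(X,\varphi,P,+)$.

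The core of the proof is density, for which I would adapt the strategy of \cite{Sun-2019} and \cite{JCL} but substitute specification with the horseshoe approximation property proved in \cite{STVW}. Given $\mu\in\mathcal{P}(X,\varphi,P,+)$ and $\eta>0$, the hypothesis $\overline{\mathcal{M}_1(\Lambda)}=\mathcal{M}_{\rm inv}(\Lambda)$ lets me reduce to approximating measures giving no mass to singularities, and the horseshoe approximation then furnishes an ergodic measure $\nu^{\rm hi}$ supported on a horseshoe $\Lambda_0\subset\Lambda$ with $d^*(\nu^{\rm hi},\mu)$ small and $h_{\nu^{\rm hi}}$ as close to $h_\mu$ from below as desired; in particular $P_{\nu^{\rm hi}}>P$ whenever $P_\mu>P$. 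Since $P>P_{\inf}(X,\varphi)$, I fix once and for all an ergodic measure $\nu^{\rm lo}$ with $P_{\nu^{\rm lo}}<P$, arranged by a further horseshoe approximation to be supported on a horseshoe $\Lambda_1\subset\Lambda$. Property~(H) together with Smale's theorem for homoclinically related hyperbolic periodic orbits then embeds $\Lambda_0\cup\Lambda_1$ into a common horseshoe $\Lambda_2\subset\Lambda$, on whose time-one map the classical Bowen specification holds; the intermediate pressure construction for systems with specification produces an ergodic $\nu\in\mathcal{M}_{\rm erg}(\Lambda_2)$ with $P_\nu(X,\varphi)=P$.

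The main obstacle I anticipate is controlling the \emph{location} of this $\nu$: the construction must place it within the weak-$*$ $\eta$-neighborhood of $\mu$. The specification construction yields $\nu$ as a generic point for a convex combination $t\nu^{\rm hi}+(1-t)\nu^{\rm lo}$ with $t\in(0,1)$ tuned so that the affine identity $tP_{\nu^{\rm hi}}+(1-t)P_{\nu^{\rm lo}}=P$ holds, and by choosing $P_{\nu^{\rm hi}}-P$ much smaller than $P-P_{\nu^{\rm lo}}$ one forces $t$ close to $1$, pushing the convex combination, and hence $\nu$, arbitrarily close to $\nu^{\rm hi}$ and therefore to $\mu$. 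The boundary case $P_\mu=P$ is simpler and handled directly by approximating $\mu$ by a horseshoe-supported ergodic measure and applying intermediate pressure on a single horseshoe. With density in hand, the Baire category theorem gives residuality, and since residual subsets of nonempty Polish spaces are nonempty, the intermediate pressure property for $\Lambda$ follows at once.
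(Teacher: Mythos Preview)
Your framework is correct and matches the paper: $\mathcal{P}(X,\varphi,P,+)$ is nonempty (convex interpolation), compact (continuity of $\mu\mapsto\int\varphi\,d\mu$ plus upper semicontinuity of entropy from \cite{PYY}), the target set is $G_\delta$, and Baire finishes once density is shown. The gap is in your density argument.

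Your location control does not work as stated. You claim that ``by choosing $P_{\nu^{\rm hi}}-P$ much smaller than $P-P_{\nu^{\rm lo}}$ one forces $t$ close to $1$'', but you have no such freedom: your $\nu^{\rm hi}$ is built with $h_{\nu^{\rm hi}}$ close to $h_\mu$ and $\int\varphi\,d\nu^{\rm hi}$ close to $\int\varphi\,d\mu$, so $P_{\nu^{\rm hi}}$ is pinned near $P_\mu$, not near $P$. When $P_\mu$ is far above $P$ (which certainly occurs for some $\mu\in\mathcal{P}(X,\varphi,P,+)$), the interpolation parameter $t=(P-P_{\nu^{\rm lo}})/(P_{\nu^{\rm hi}}-P_{\nu^{\rm lo}})$ is bounded away from $1$, and your $\nu$ lands far from $\mu$. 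Pushing $P_{\nu^{\rm lo}}$ very negative is also blocked, since $P_{\nu^{\rm lo}}\geq\inf_\Lambda\varphi$. Your treatment of the boundary case $P_\mu=P$ has a related problem: the approximating horseshoe has topological entropy strictly below $h_\mu$, so its pressure range tops out strictly below $P_\mu=P$ and you cannot hit $P$ there.

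The paper's route exploits a feature of the horseshoe approximation from \cite{STVW} that your two-horseshoe interpolation discards: \emph{every} invariant measure on the approximating horseshoe $\Lambda_0$ is already within $\varepsilon$ of $\mu$ in $d^*$. So rather than taking $\nu^{\rm hi}$ with entropy near $h_\mu$ and then dragging it toward $P$ via a far-away $\nu^{\rm lo}$, one selects on $\Lambda_0$ an ergodic measure with a \emph{prescribed} entropy $h$ (roughly $h\approx P-\int\varphi\,d\mu$, with small corrections) so that its pressure lands in the desired window; proximity to $\mu$ is then automatic. The paper packages this as an intermediate-entropy-with-location statement (Proposition~\ref{Pro:generic-entropy}: ergodic measures of entropy exactly $h$ are residual in $\{h_\mu\geq h\}$), and afterwards reduces the boundary cases $\int\varphi\,d\mu=P$ and $P_\mu=P$ to the strict case by first convex-perturbing $\mu$ inside $\mathcal{M}_{\rm inv}(\Lambda)$---a reduction your outline also omits.
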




Now we manage to prove the generic part of Theorem~\ref{Thm:Lorenz} \&~\ref{Thm:SH-attractor} based on Theorem~\ref{Thm:main-intermediate-pressure} whose proof will be given in Section~\ref{Section:proof-Theorem-3.1}. For completeness, we first state the generic part of~\cite[Theorem B~$\&$~B.1]{STW}.

\begin{theorem}[{Generic part of~\cite[Theorem B~$\&$~B.1]{STW}}]\label{Thm:generic-STW}
	There exists a dense $G_\delta$ subset $\mathcal{R}^r$ in $\mathscr{X}^r(M^3)$ for every  $r\in\mathbb{N}_{\geq 2}\cup\{\infty\}$  and a dense $G_\delta$ subset $\mathcal{R}\in\mathscr{X}^1(M)$ such that if $\Lambda$ is a geometric Lorenz attractor of  $X\in\mathcal{D}^r$ or $\Lambda$ is a singular hyperbolic attractor of  $X\in\mathcal{D}$, then 
    \[\overline{\mathcal{M}_{\rm per}(\Lambda)}=
		\overline{\mathcal{M}_{\rm erg}(\Lambda)}=		\mathcal{M}_{\rm inv}(\Lambda),\] 
	where  $\mathcal{M}_{\rm per}(\Lambda)$ denotes the set of all periodic measures supported on $\Lambda$.
		Moreover, the space $\mathcal{M}_{\rm erg}(\Lambda)$ is path connected.		
\end{theorem}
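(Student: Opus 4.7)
The plan is to construct a dense $G_\delta$ set on which standard Kupka--Smale conditions hold together with extra generic conditions that force periodic orbits to densely populate the simplex of invariant measures and to accumulate on the singularity. First I would fix a residual set $\mathcal{R}^r \subset \mathscr{X}^r(M^3)$ on which: (a) every periodic orbit and every singularity is hyperbolic; (b) Property (H) holds in the homoclinic class of every hyperbolic periodic orbit lying in the attractor (this is a dense condition in view of Proposition~\ref{Prop:Lorenz}, since a geometric Lorenz attractor is robustly a homoclinic class, and it can be upgraded to a $G_\delta$ condition via the Hayashi--type connecting lemma); (c) a countable, dense collection of preperiodic points of the one-dimensional quotient map $f$ is realized by genuine periodic orbits of the flow, providing sufficient combinatorial richness for shadowing.

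For the equality $\overline{\mathcal{M}_{\rm per}(\Lambda)} = \mathcal{M}_{\rm inv}(\Lambda)$ I would argue in three steps. Step 1: for each ergodic $\mu \in \mathcal{M}_1(\Lambda)$ and each $\varepsilon>0$, a Katok--type horseshoe approximation adapted to singular hyperbolic attractors (using sectional expansion on $E^{cu}$ and the Pesin theory of the one-dimensional quotient) produces a uniformly hyperbolic basic set $H \subset \Lambda \setminus W^s_{\rm loc}(\sigma)$ carrying an ergodic measure $\varepsilon$-close to $\mu$; Sigmund's theorem on $H$ then gives a periodic measure $2\varepsilon$-close to $\mu$. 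Step 2: approximate $\delta_\sigma$ by periodic measures, using the Lorenz exponents $\lambda_1+\lambda_3<0$ and genericity to produce periodic orbits $\gamma_n$ whose itineraries in $f$ contain arbitrarily long runs arbitrarily close to the discontinuity $0$; the flow slowdown near $\sigma$ forces the normalized empirical measures of $\gamma_n$ to converge weakly to $\delta_\sigma$. Step 3: approximate an arbitrary $\mu \in \mathcal{M}_{\rm inv}(\Lambda)$. Decompose $\mu = a\delta_\sigma + (1-a)\mu'$ with $\mu' \in \mathcal{M}_1(\Lambda)$, ergodic-decompose $\mu'$, then build a pseudo-orbit that concatenates long singularity excursions (carrying weight $\approx a$) with regular orbit segments distributed according to the ergodic decomposition of $\mu'$; this pseudo-orbit is shadowed by a single periodic orbit through the expanding quotient $f$, producing a periodic measure close to $\mu$. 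The chain $\overline{\mathcal{M}_{\rm per}(\Lambda)} \subseteq \overline{\mathcal{M}_{\rm erg}(\Lambda)} \subseteq \mathcal{M}_{\rm inv}(\Lambda)$ is automatic, so the two equalities follow.

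For the path--connectedness of $\mathcal{M}_{\rm erg}(\Lambda)$, I would connect two ergodic measures $\mu_0, \mu_1$ by first approximating each by a periodic measure $\nu_i$ inside a basic set $H_i \subset \Lambda$. By Property (H) any two periodic orbits of $\Lambda$ are homoclinically related, hence $H_0 \cup H_1$ is enclosed in a common basic set $H$, topologically conjugate to a subshift of finite type. The space of ergodic measures of such a subshift is path--connected (for instance through parameterized families of equilibrium states or convex combinations of periodic measures regularized to ergodic shift--invariant measures by a symbolic smoothing argument), yielding a path from $\nu_0$ to $\nu_1$ inside $\mathcal{M}_{\rm erg}(H)$. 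Concatenating these segments with short paths linking $\mu_i$ to $\nu_i$, and separately producing an explicit continuous family $\{\mu_t\}_{t\in[0,1]}$ of ergodic periodic measures whose singularity--frequency tends to $1$ so that $\mu_1 \to \delta_\sigma$ (needed whenever one endpoint is $\delta_\sigma$), gives a continuous path from $\mu_0$ to $\mu_1$ in $\mathcal{M}_{\rm erg}(\Lambda)$.

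The main obstacle is Step~2 together with the treatment of $\delta_\sigma$ in the path--connectedness argument: these are precisely the features that fail for vector fields in the dense counterpart $\mathcal{D}^r$ of Theorem~\ref{Thm:Lorenz}, where $\delta_\sigma$ becomes isolated in $\mathcal{M}_{\rm erg}(\Lambda)$. The genericity assumption must rule out orbit configurations that prevent periodic orbits from returning arbitrarily close to $W^s_{\rm loc}(\sigma)$, typically via a transversality/non--degeneracy condition on the way the unstable separatrices of $\sigma$ hit the cross section $\Sigma$ relative to the discontinuity line $\ell$, and such conditions must be shown to define a dense $G_\delta$ in $\mathscr{X}^r(M^3)$. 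The technical heart lies in the combined control of the expanding one--dimensional quotient $f$ near the discontinuity and the partial hyperbolicity transverse to $W^s_{\rm loc}(\sigma)$; in the $C^1$ generic setting of singular hyperbolic attractors, the same scheme applies provided $f$ is replaced by the associated one--dimensional center quotient, which is the reason Theorem~\ref{Thm:SH-attractor} is stated only in $C^1$ topology.
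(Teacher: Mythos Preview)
The statement labeled Theorem~\ref{Thm:generic-STW} is not proved in this paper at all; it is quoted as the generic part of Theorem~B and Theorem~B.1 of~\cite{STW} and used as a black box in the derivation of the generic parts of Theorems~\ref{Thm:Lorenz} and~\ref{Thm:SH-attractor}. There is therefore no proof in the present paper against which your proposal can be compared. Your outline is a plausible high-level sketch of the strategy carried out in~\cite{STW}---in particular you correctly single out the approximation of $\delta_\sigma$ by periodic measures (your Step~2) as the decisive step, and this is exactly what the $C^r$-connecting lemma of~\cite{STW} is designed to supply---but the details live in that reference, not here.

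One genuine gap in your sketch is worth flagging. In the path-connectedness argument you propose to ``concatenate these segments with short paths linking $\mu_i$ to $\nu_i$,'' but the fact that $\mu_i$ and $\nu_i$ are $\varepsilon$-close in $d^*$ does not by itself produce a continuous path between them inside $\mathcal{M}_{\rm erg}(\Lambda)$; an arbitrary ergodic measure need not be joinable to a nearby periodic one by a short arc of ergodic measures. The actual construction (as in Sigmund-type arguments and in~\cite{STW}) avoids this by working with a \emph{nested} increasing sequence of horseshoes whose ergodic-measure spaces are each path connected and whose union is dense, so that the whole path is assembled inside these horseshoes and extended to the limit by a Cauchy-type argument, rather than by gluing an approximation step onto an existing arc. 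Similarly, your appeal to a ``Hayashi-type connecting lemma'' to upgrade Property~(H) to a $G_\delta$ condition is off target in the $C^r$ ($r\ge 2$) setting: the standard $C^1$ connecting lemma is unavailable there, and~\cite{STW} develops a bespoke $C^r$ connecting mechanism specific to the Lorenz geometry (via the one-dimensional expanding quotient) to achieve the required genericity.
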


\begin{proof}[Proof of  Theorem~\ref{Thm:Lorenz} \&~\ref{Thm:SH-attractor}: generic part]
	Firstly, every geometric Lorenz attractor is a homoclinic class and satisfies Property $(H)$  by Proposition~\ref{Prop:Lorenz}. 
	By the main results in~\cite{CY-Robust-attractor}, for $C^1$ open and dense vector fields $X\in\mathscr{X}^1(M)$, every non-trivial singular hyperbolic attractor is a homoclinic class satisfying Property $(H)$.
	Thus combined by Theorem~\ref{Thm:generic-STW}, there exist a dense $G_\delta$ subset $\mathcal{R}^r$ in $\mathscr{X}^r(M^3)$ for every  $r\in\mathbb{N}_{\geq 2}\cup\{\infty\}$  and a dense $G_\delta$ subset $\mathcal{R}\in\mathscr{X}^1(M)$ such that if $\Lambda$ is a geometric Lorenz attractor of  $X\in\mathcal{D}^r$ or $\Lambda$ is a singular hyperbolic attractor of  $X\in\mathcal{D}$, then $\Lambda$ is a homoclinic class satisfying Property $(H)$ and moreover 
	\[\overline{\mathcal{M}_{\rm per}(\Lambda)}=\mathcal{M}_{\rm inv}(\Lambda).\]
	As a consequence, one has
	\[\overline{\mathcal{M}_{1}(\Lambda)}=\mathcal{M}_{\rm inv}(\Lambda).\]
	Thus the generic part of Theorem~\ref{Thm:Lorenz}~$\&$~\ref{Thm:SH-attractor} follows directly from Theorem~\ref{Thm:main-intermediate-pressure}.
%
\end{proof}

\subsection{Preliminary lemmas for entropy}

This subsection devotes to prove the following proposition.
\begin{proposition}\label{Pro:generic-entropy}
	Let  $X\in\mathscr{X}^1(M)$ and $\Lambda$ be a non-trivial singular hyperbolic homoclinic class of $X$ satisfying Property (H). 
	Assume moreover that $\overline{\mathcal{M}_1(\Lambda)}=\mathcal{M}_{\rm inv}(\Lambda)$. 
	Then for any $h\in[0, h_{\rm top}(\Lambda))$, the set
	\[\mathcal{M}_{\rm erg}(\Lambda,h)\colon=\left\{\mu\in \mathcal{M}_{\rm erg}(\Lambda)\colon h_\mu(X)=h\right\}\]
	forms a residual subset in the compact metric space
	\[\mathcal{M}(\Lambda,h,+)\colon=\left\{\mu\in \mathcal{M}_{\rm inv}(\Lambda)\colon h_\mu(X)\geq h\right\}.\]
\end{proposition}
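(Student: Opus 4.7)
The plan is to verify the three standard ingredients that together give residuality: $\mathcal{M}(\Lambda,h,+)$ is a non-empty compact metric space, $\mathcal{M}_{\rm erg}(\Lambda,h)$ is a $G_\delta$ subset of it, and $\mathcal{M}_{\rm erg}(\Lambda,h)$ is dense in it. Compactness follows from entropy expansivity of singular hyperbolic attractors~\cite{PYY}, which makes $\mu\mapsto h_\mu(X)$ upper semi-continuous on the weak$^*$-compact space $\mathcal{M}_{\rm inv}(\Lambda)$, so that $\{h_\mu\geq h\}$ is closed; non-emptiness is ensured because $h<h_{\rm top}(\Lambda)$ and measures of maximal entropy exist. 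For the $G_\delta$ property, I would combine the classical fact that ergodic measures form a $G_\delta$ subset of $\mathcal{M}_{\rm inv}(\Lambda)$ with the identity
\[
\{\mu\in\mathcal{M}(\Lambda,h,+)\colon h_\mu(X)=h\}=\mathcal{M}(\Lambda,h,+)\setminus\bigcup_{n\geq 1}\{\mu\colon h_\mu(X)\geq h+\tfrac{1}{n}\},
\]
where each subtracted set is closed by upper semi-continuity.

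The heart of the proof is density. Given $\mu\in\mathcal{M}(\Lambda,h,+)$ and $\eps>0$, I first reduce to the case $h_\mu(X)>h$: since $h<h_{\rm top}(\Lambda)$ there is an ergodic $\mu^*\in\mathcal{M}_{\rm erg}(\Lambda)$ with $h_{\mu^*}(X)>h$, and by affinity of entropy the convex combination $(1-\eta)\mu+\eta\mu^*$ has entropy strictly greater than $h$ and lies arbitrarily close to $\mu$ when $\eta$ is small. Assuming then that $h_\mu(X)>h$, I would apply the horseshoe approximation property of~\cite{STVW} under the hypotheses $\overline{\mathcal{M}_1(\Lambda)}=\mathcal{M}_{\rm inv}(\Lambda)$ and Property~(H) to extract a basic hyperbolic set $\Lambda^*\subset\Lambda\setminus{\rm Sing}(\Lambda)$ and an invariant measure $\nu_1\in\mathcal{M}_{\rm inv}(\Lambda^*)$ with $d^*(\nu_1,\mu)<\eps/2$ and $h_{\nu_1}(X)$ close to $h_\mu(X)$, in particular strictly greater than $h$.

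The final step is to invoke the residuality-of-intermediate-entropy theorem inside the horseshoe $\Lambda^*$. Being a uniformly hyperbolic basic set for the flow, $\Lambda^*$ satisfies the specification property, so the argument of Sun~\cite{Sun-2019} (or the subshift version of~\cite{JCL}) shows that $\{\nu\in\mathcal{M}_{\rm erg}(\Lambda^*)\colon h_\nu(X)=h\}$ is residual, and hence dense, in $\{\nu'\in\mathcal{M}_{\rm inv}(\Lambda^*)\colon h_{\nu'}(X)\geq h\}$, a set containing $\nu_1$. Selecting an ergodic $\nu$ in this dense subset with $d^*(\nu,\nu_1)<\eps/2$ yields the required approximant via the triangle inequality. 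I expect the hard part to be the middle step: distilling from~\cite{STVW} a horseshoe whose invariant measure simultaneously approximates $\mu$ in weak$^*$ topology and carries entropy at least $h$. The density hypothesis $\overline{\mathcal{M}_1(\Lambda)}=\mathcal{M}_{\rm inv}(\Lambda)$ enters here crucially to bypass the singular atom, which is precisely the source of entropy instability near ${\rm Sing}(\Lambda)$; Property~(H) enters to allow assembling homoclinically related horseshoes into one large horseshoe of prescribed entropy. Marrying these two inputs cleanly with the horseshoe approximation result is the delicate part of the argument.
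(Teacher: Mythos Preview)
Your proposal is correct and follows essentially the same route as the paper: upper semi-continuity of entropy from~\cite{PYY} gives compactness of $\mathcal{M}(\Lambda,h,+)$ and the $G_\delta$ property, and density is obtained by the same three-step reduction (convex combination to force $h_\mu>h$, horseshoe approximation from~\cite{STVW}, then intermediate entropy on the horseshoe). The only cosmetic differences are that the paper organizes the argument via the layers $\mathcal{M}_{\rm erg}(\Lambda,h,h')$ and intersects over $h'=h+1/k$, and that for the final step on the horseshoe the paper invokes~\cite{LSWW} (stated for flows) rather than the discrete-time results of~\cite{Sun-2019,JCL}; your citation works after the standard passage to a cross-section, but~\cite{LSWW} is the cleaner reference here.
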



We first recall the definition of horseshoe for vector fields.
\begin{definition}\label{Def:horseshoe}
	Given $X\in\mathscr{X}^1(M)$ and an invariant compact set $\Lambda$. We call $\Lambda$ a \emph{basic set} if it is transitive, hyperbolic and locally maximal. We call $\Lambda$ a \emph{horseshoe} if it is a non-trivial basic set, i.e. $\Lambda$ is not reduced to the single orbit of a  hyperbolic critical element, and the intersection between $\Lambda$ and any local cross section of the flow is totally disconnected.  
\end{definition}

\begin{lemma}\label{Lem:dense-intermediate-entropy}
	Let $X\in\mathscr{X}^1(M)$ and $\Lambda$ be a non-trivial singular hyperbolic homoclinic class of $X$ satisfying Property (H). Then for each $\mu\in\mathcal{M}_{\rm erg}(\Lambda)$ with $h_\mu(X)>0$, for each $h\in(0,h_\mu(X))$ and each $\varepsilon>0$, there exists $\nu\in\mathcal{M}_{\rm erg}(\Lambda)$ such that
	\[d^*(\nu,\mu)<\varepsilon \qquad\text{~and~}\qquad h_\nu(X)=h.\]
\end{lemma}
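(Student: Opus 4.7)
My plan is to combine a Katok-style horseshoe approximation theorem for singular hyperbolic homoclinic classes with the intermediate entropy property for horseshoes of vector fields from~\cite{LSWW}. Fix $\mu \in \mathcal{M}_{\rm erg}(\Lambda)$ with $h_\mu(X) > h \geq 0$ and $\varepsilon > 0$, and pick $h' \in (h, h_\mu(X))$. The goal is to produce a horseshoe $H \subset \Lambda$ with two properties: $h_{\rm top}(H) > h'$, and every invariant measure supported on $H$ lies in the weak-$*$ ball of radius $\varepsilon$ around $\mu$. Once such an $H$ is in hand, applying the intermediate entropy property for vector field horseshoes inside $H$ produces $\nu \in \mathcal{M}_{\rm erg}(H) \subset \mathcal{M}_{\rm erg}(\Lambda)$ with $h_\nu(X) = h$, and the weak-$*$ control forces $d^*(\nu, \mu) < \varepsilon$ automatically.

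The construction of $H$ uses Pesin theory on the singular hyperbolic splitting. Since $\mu$ has positive entropy and $\sing(X)$ is finite, $\mu(\sing(X)) = 0$, so $\mu$-typical points are regular in the Pesin sense on blocks of large $\mu$-measure on which the nonuniformly hyperbolic structure inherited from $E^{ss} \oplus E^{cu}$ becomes uniform. On a long Birkhoff-generic segment of such a point one obtains exponentially many $(n, \delta)$-separated orbit pieces whose empirical measures are $\varepsilon/2$-close to $\mu$ and whose count grows like $e^{nh'}$. Property (H) then allows closing these segments into a genuine basic set by using a periodic orbit $p \in \Lambda$ as a connector through the inclination lemma applied to transverse homoclinic intersections of $W^s(\orb(p))$ and $W^u(\orb(p))$. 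Because the closing pieces can be made arbitrarily short relative to the generic segment, Birkhoff averages along any orbit in $H$ remain $\varepsilon$-close to those along the original $\mu$-typical point, which upgrades to weak-$*$ proximity of $\mathcal{M}_{\rm inv}(H)$ to $\mu$; simultaneously the entropy estimate $h_{\rm top}(H) > h'$ is preserved.

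The main obstacle is this first step, since the presence of the singularity prevents a direct application of the classical Katok horseshoe theorem: orbits wandering too close to $\sing(X)$ lose uniform hyperbolicity along the flow direction, and return times to any cross section become unbounded. This is overcome by working inside a Pesin block bounded away from $W^s_{\rm loc}(\sing(X))$ and using the sectional expansion of $E^{cu}$ in place of pointwise expansion in the flow direction, exactly as in the horseshoe approximation property developed in the authors' companion paper~\cite{STVW} under the same hypotheses (singular hyperbolicity plus Property (H)). I would therefore cite that horseshoe approximation result and then carry out the short entropy selection argument above: fix the approximating horseshoe $H$, and apply the intermediate entropy theorem for vector field horseshoes from~\cite{LSWW} to extract the desired $\nu$ with $h_\nu(X) = h$ and $d^*(\nu, \mu) < \varepsilon$.
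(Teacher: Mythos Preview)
Your proposal is correct and follows essentially the same route as the paper: invoke the horseshoe approximation result from~\cite{STVW} (the paper cites it as \cite[Lemma 4.10]{STVW}) to obtain a horseshoe $H\subset\Lambda$ with $h_{\rm top}(H)>h$ and $\mathcal{M}_{\rm inv}(H)$ contained in the $\varepsilon$-ball around $\mu$, then apply the intermediate entropy property for horseshoes of flows from~\cite{LSWW} to select $\nu$. Your middle paragraphs sketching the Pesin-theoretic construction of $H$ are unnecessary here since you ultimately cite~\cite{STVW} for the black-box result, just as the paper does.
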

\begin{proof}
	Since $h_\mu(X)>0$, one has that $\mu(\sing(\Lambda))=0$.
	For any $h\in(0,h_\mu(X))$ and any $\varepsilon>0$, by~\cite[Lemma 4.10]{STVW}, there exist a horseshoe $\Lambda_{\varepsilon,h}\subset\Lambda$ such that 
	\begin{itemize}
		\item every $\omega\in\mathcal{M}_{\rm inv}(\Lambda)$ satisfies
		\[d^*(\omega,\mu)<\varepsilon;\]
		
		\item $h_{\rm top}(\Lambda_{h,\varepsilon})>h.$
	\end{itemize}
    Since every horseshoe satisfies the intermediate entropy property~\cite[Proposition 1.2]{LSWW}, for the constant $h\in (0,\Lambda_{h,\varepsilon})$, there exists $\nu\in\mathcal{M}_{\rm erg}(\Lambda_{h,\varepsilon})\subset \mathcal{M}_{\rm erg}(\Lambda)$ such that $h_\nu(X)=h$.
    The ergodic measure $\nu$ satisfies the conclusion.
\end{proof}

The following lemma states  the ``entropy dense'' property of $\mathcal{M}_{\rm inv}(\Lambda)$ provided $\Lambda$ being a singular hyperbolic homoclinic class with Property $(H)$.
\begin{lemma}\label{Lem:entropy-dense}{\rm\cite[Proposition 4.13]{STVW}}
	Let $X\in\mathscr{X}^1(M)$ and $\Lambda$ be a non-trivial singular hyperbolic homoclinic class of $X$ satisfying Property (H). Assume moreover that $\overline{\mathcal{M}_1(\Lambda)}=\mathcal{M}_{\rm inv}(\Lambda)$. Then 
	for any $\mu\in \mathcal{M}_{\rm inv}(\Lambda)$ and any $\varepsilon>0$, there exists  $\nu\in \mathcal{M}_{\rm erg}(\Lambda)$ such that 
	\[d^*(\nu, \mu)<\varepsilon \qquad\text{~and~}\qquad h_{\nu}(X)>h_\mu(X)-\varepsilon.\]
\end{lemma}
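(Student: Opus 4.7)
The plan is to reduce the general invariant measure $\mu$ to a finite convex combination of hyperbolic ergodic measures, replace each ergodic piece by a horseshoe approximation, and then amalgamate these horseshoes into a single basic set on which an ergodic measure of the desired entropy can be produced.

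First I would use the hypothesis $\overline{\mathcal{M}_1(\Lambda)} = \mathcal{M}_{\rm inv}(\Lambda)$ together with lower semi-continuity afforded by the ergodic-decomposition argument to reduce to the case $\mu \in \mathcal{M}_1(\Lambda)$. More precisely, since entropy is affine and upper semi-continuous on singular hyperbolic invariant sets (via entropy expansiveness from~\cite{PYY}), one can perturb $\mu$ slightly in $\mathcal{M}_1(\Lambda)$ without decreasing its entropy by more than $\eps/4$, so it suffices to prove the conclusion under the additional assumption $\mu(\sing(\Lambda)) = 0$. Then, by the ergodic decomposition theorem, I would choose a finite convex combination $\bar\mu = \sum_{i=1}^N \lambda_i \mu_i$ of ergodic measures $\mu_i \in \mathcal{M}_{\rm erg}(\Lambda)$ such that $d^*(\bar\mu, \mu) < \eps/4$ and $\sum \lambda_i h_{\mu_i}(X) > h_\mu(X) - \eps/4$, using standard convexity and a suitable partition of the simplex of ergodic measures.

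Next, for each ergodic $\mu_i$ with $h_{\mu_i}(X) > 0$, I would apply the horseshoe approximation result~\cite[Lemma 4.10]{STVW} (which is the same tool used in Lemma~\ref{Lem:dense-intermediate-entropy}) to obtain a horseshoe $\Lambda_i \subset \Lambda$ all of whose invariant measures are $\eps/4$-close to $\mu_i$ and whose topological entropy exceeds $h_{\mu_i}(X) - \eps/4$; if $h_{\mu_i}(X) = 0$ one takes a periodic orbit $\eps/4$-close to $\mu_i$. Property $(H)$ says all periodic orbits in $\Lambda$ are pairwise homoclinically related, hence the periodic orbits inside any two horseshoes $\Lambda_i, \Lambda_j$ are homoclinically related. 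A standard $\lambda$-lemma / inclination argument (in its version for flows, as in the proof of~\cite[Theorem 6.8]{AP}) then produces a single horseshoe $\hat\Lambda \subset \Lambda$ containing each $\Lambda_i$ in such a way that one may choose a sub-horseshoe $\tilde\Lambda \subset \hat\Lambda$ symbolically conjugate to a subshift of finite type that records long orbit segments in each $\Lambda_i$ with prescribed frequencies $\lambda_i$.

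On this amalgamated horseshoe I would construct the desired ergodic measure $\nu$ by specification/coding: partition typical orbits so that the fraction of time spent shadowing long orbit segments in $\Lambda_i$ is close to $\lambda_i$. Choosing the segments long enough, Birkhoff averages of any fixed finite collection of test continuous functions are controlled by $\sum \lambda_i \int \varphi\, d\mu_i$, which yields $d^*(\nu,\bar\mu) < \eps/4$, hence $d^*(\nu,\mu) < \eps$. The entropy contribution is $\sum \lambda_i h_{\mu_i}(X)$ up to the exponential cost of the coding, which can be made negligible; invoking the intermediate entropy property of horseshoes~\cite[Proposition 1.2]{LSWW} on $\tilde\Lambda$ lets one fine-tune $\nu$ to be ergodic with $h_\nu(X) > \sum \lambda_i h_{\mu_i}(X) - \eps/4 > h_\mu(X) - \eps$.

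The main obstacle will be the amalgamation step: passing from the separate horseshoes $\Lambda_i$ to a single basic set $\tilde\Lambda$ on which ergodic measures can be built with prescribed approximate weights $\lambda_i$ at each $\mu_i$. The singular hyperbolic (rather than uniformly hyperbolic) ambient setting means there is no global specification property (see~\cite{SVY15,Wen-Wen20}), so the connecting orbits between $\Lambda_i$ and $\Lambda_j$ must be produced by the homoclinic relation guaranteed by Property $(H)$, and one must check that these connecting segments can be taken uniformly short so that their contribution to both the weak$^*$ distance and the entropy defect is controlled by $\eps$.
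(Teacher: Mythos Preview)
The paper does not prove this lemma; it records it as \cite[Proposition 4.13]{STVW} and notes in the remark immediately after that the cited result actually establishes the stronger ``horseshoe approximation property'' (the approximating $\nu$ can be taken supported on a horseshoe inside $\Lambda$). Your sketch---reduce to $\mathcal{M}_1(\Lambda)$, pass to a finite convex combination of ergodic measures, replace each by a horseshoe via \cite[Lemma 4.10]{STVW}, then amalgamate the horseshoes using Property~(H) and build $\nu$ by coding---is the standard route and is consistent with what that reference does, so the overall strategy is fine.

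There is, however, a real slip in your first reduction step. You invoke upper semi-continuity of the entropy map (from \cite{PYY}) to argue that $\mu$ can be perturbed into $\mathcal{M}_1(\Lambda)$ while losing at most $\eps/4$ in entropy. Upper semi-continuity goes the wrong direction for this: it only says $\limsup_n h_{\mu_n}(X)\le h_\mu(X)$ when $\mu_n\to\mu$, so it gives no lower bound on the entropy of the approximants and does not rule out a large drop. The correct argument is structural rather than topological. By ergodic decomposition every ergodic component of $\mu$ is either a Dirac mass $\delta_\sigma$ at some $\sigma\in\sing(\Lambda)$ or lies in $\mathcal{M}_1(\Lambda)$, so one may write $\mu=\alpha\,\mu_{\rm sing}+(1-\alpha)\,\mu_{\rm reg}$ with $\mu_{\rm sing}$ a convex combination of singular Dirac masses and $\mu_{\rm reg}\in\mathcal{M}_1(\Lambda)$. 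Since $h_{\mu_{\rm sing}}(X)=0$, one has $h_\mu(X)=(1-\alpha)\,h_{\mu_{\rm reg}}(X)$; now use the density hypothesis $\overline{\mathcal{M}_1(\Lambda)}=\mathcal{M}_{\rm inv}(\Lambda)$ to replace $\mu_{\rm sing}$ by a nearby measure in $\mathcal{M}_1(\Lambda)$, obtaining $\mu'\in\mathcal{M}_1(\Lambda)$ with $d^*(\mu',\mu)$ small and $h_{\mu'}(X)\ge(1-\alpha)\,h_{\mu_{\rm reg}}(X)=h_\mu(X)$. With this fix in place the rest of your outline goes through.
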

\begin{remark*}
	In fact, \cite[Proposition 4.13]{STVW} proves more that the measure $\mu$ is chosen to be supported on a horseshoe contained in $\Lambda$ which is called the ``horseshoe approximation property''.
\end{remark*}

Applying Lemma~\ref{Lem:dense-intermediate-entropy} and~\ref{Lem:entropy-dense}, one proves the following lemma. Recall for $h\geq 0$, we have defined in Proposition~\ref{Pro:generic-entropy}
\[\mathcal{M}(\Lambda,h,+)\colon=\left\{\mu\in \mathcal{M}_{\rm inv}(\Lambda)\colon h_\mu(X)\geq h\right\}.\]

\begin{lemma}\label{Lem:generic-entropy}
	Let $X\in\mathscr{X}^1(M)$ and $\Lambda$ be a non-trivial singular hyperbolic homoclinic class of $X$ satisfying Property (H). Assume moreover that $\overline{\mathcal{M}_1(\Lambda)}=\mathcal{M}_{\rm inv}(\Lambda)$.
	Then for any $0\leq h<h'$, the set 
	\[\mathcal{M}_{\rm erg}(\Lambda,h,h')=\mathcal{M}_{\rm inv}(\Lambda,h,h')\cap \mathcal{M}_{\rm erg}(\Lambda)\]
	is dense in $\mathcal{M}(\Lambda,h,+)$, where
	\[\mathcal{M}_{\rm inv}(\Lambda,h,h')=\{\mu\in \mathcal{M}_{\rm inv}(\Lambda)\colon h\leq h_\mu(X)<h'\}.\]
	
\end{lemma}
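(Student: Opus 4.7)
The plan is to combine Lemma~\ref{Lem:entropy-dense} (approximating an invariant measure by an ergodic one of nearly the same entropy) with Lemma~\ref{Lem:dense-intermediate-entropy} (realizing every intermediate entropy value by an ergodic measure close to a given positive-entropy ergodic one). Given $\mu\in\mathcal{M}(\Lambda,h,+)$ and $\varepsilon>0$, the goal is to produce an ergodic $\nu$ with $d^*(\nu,\mu)<\varepsilon$ and $h_\nu(X)\in[h,h')$. First I would approximate $\mu$ by an ergodic $\nu_0$ whose entropy lies strictly above $h$; then, if $h_{\nu_0}(X)$ happens to exceed $h'$, I would slide the entropy down into $[h,h')$ via Lemma~\ref{Lem:dense-intermediate-entropy}.

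The easy case is $h_\mu(X)>h$. Here I would fix $\varepsilon_1<\min\{\varepsilon/3,(h_\mu(X)-h)/2\}$ and invoke Lemma~\ref{Lem:entropy-dense} to get ergodic $\nu_0$ with $d^*(\nu_0,\mu)<\varepsilon/3$ and $h_{\nu_0}(X)>h_\mu(X)-\varepsilon_1>h\geq 0$. If $h_{\nu_0}(X)<h'$ then $\nu_0\in\mathcal{M}_{\rm erg}(\Lambda,h,h')$ already. Otherwise $h_{\nu_0}(X)\geq h'$, and I apply Lemma~\ref{Lem:dense-intermediate-entropy} to $\nu_0$ with target entropy $h^*=h$ when $h>0$ (legitimate, since $h^*\in(0,h_{\nu_0}(X))$ as $h_{\nu_0}(X)\geq h'>h$) or $h^*=h'/2$ when $h=0$. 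This supplies ergodic $\nu$ with $d^*(\nu,\nu_0)<\varepsilon/3$ and $h_\nu(X)=h^*\in[h,h')$; the triangle inequality closes the estimate.

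The delicate case is $h_\mu(X)=h$, where Lemma~\ref{Lem:entropy-dense} may produce $\nu_0$ with $h_{\nu_0}(X)<h$. To reduce it to the strict one (provided $h<h_{\rm top}(\Lambda)$, which is the range relevant to Proposition~\ref{Pro:generic-entropy}), I would first boost the entropy of $\mu$ by a convex-combination trick: pick any invariant $\omega$ with $h_\omega(X)>h$ (supplied by the variational principle on the compact set $\Lambda$) and set $\mu_\lambda=(1-\lambda)\mu+\lambda\omega$. Affineness of the metric entropy yields $h_{\mu_\lambda}(X)=(1-\lambda)h+\lambda h_\omega(X)>h$, while $d^*(\mu_\lambda,\mu)\to 0$ as $\lambda\to 0^+$. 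Choosing $\lambda$ small enough that $d^*(\mu_\lambda,\mu)<\varepsilon/3$, the previous paragraph applied to $\mu_\lambda$ with tolerance $2\varepsilon/3$ produces the desired ergodic $\nu$. The remaining edge case $h\geq h_{\rm top}(\Lambda)$ is either vacuous ($\mathcal{M}(\Lambda,h,+)=\emptyset$ if $h>h_{\rm top}(\Lambda)$) or concerns only the density of ergodic MMEs inside the set of all MMEs, which lies outside the range of $h$ used in Proposition~\ref{Pro:generic-entropy}.

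The main obstacle is precisely this borderline-entropy issue: Lemma~\ref{Lem:entropy-dense} only guarantees $h_{\nu_0}(X)>h_\mu(X)-\varepsilon_1$, which may fall below $h$ when $h_\mu(X)=h$, so a direct application fails exactly at the left boundary of $\mathcal{M}(\Lambda,h,+)$. The convex-combination boost, which rests on affineness of the entropy map together with existence of an invariant measure of entropy strictly above $h$, is the key device that converts the boundary case into the strict inequality case; once inside the strict range, the remainder of the argument is a routine interplay between the two preliminary lemmas.
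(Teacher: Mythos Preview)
Your proposal is correct and follows essentially the same route as the paper: boost the entropy strictly above $h$ via a convex combination with a higher-entropy measure, apply Lemma~\ref{Lem:entropy-dense} to get an ergodic approximant with entropy still above $h$, then use Lemma~\ref{Lem:dense-intermediate-entropy} to slide the entropy down into $[h,h')$. The only differences are organizational: the paper performs the convex boost unconditionally (so it never needs your case split $h_\mu(X)>h$ versus $h_\mu(X)=h$) and always targets a value $h''\in(h,\min\{h',h_{\mu_2}(X)\})$ in the final step; like you, the paper's proof tacitly assumes $h<h_{\rm top}(\Lambda)$, which is indeed the only range used in Proposition~\ref{Pro:generic-entropy}.
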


\begin{proof}
	Given $\mu\in \mathcal{M}(\Lambda,h,+)$, i.e. $h_\mu(X)\geq h$. Since $h<h_{\rm top}(\Lambda)$, one can take $\mu_0\in\mathcal{M}_{\rm erg}(\Lambda)$ such that $h_{\mu_0}(X)>h$. Given $\varepsilon>0$,  take $\theta\in (0,1)$, such that the invariant measure $\mu_1=\theta\cdot \mu_0+(1-\theta)\cdot\mu$ satisfies $d^*(\mu_1,\mu)<\varepsilon$.
	One has that 
	\[h_{\mu_1}(X)=\theta\cdot h_{\mu_0}(X)+(1-\theta)\cdot h_\mu(X)>h.\]
	By Lemma~\ref{Lem:entropy-dense}, there exists $\mu_2\in\mathcal{M}_{\rm erg}(\Lambda)$ such that 
	\[d^*(\mu_2,\mu_1)<\varepsilon \qquad\text{~and~}\qquad h_{\mu_2}(X)>h_{\mu_1}(X)-(h_{\mu_1}(X)-h)=h.\]
	Applying Lemma~\ref{Lem:dense-intermediate-entropy} to $\mu_2$ and $h''\in (h,\min\{h',h_{\mu_2}(X)\})$, there exists $\mu_3\in \mathcal{M}_{\rm erg}(\Lambda)$ such that 
	\[d^*(\mu_3,\mu_2)<\varepsilon \qquad\text{~and~}\qquad h_{\mu_3}(X)=h''\in (h,h').\]
	Thus $d^*(\mu_3,\mu)<3\cdot\varepsilon$ and $\mu_3\in \mathcal{M}_{\rm erg}(\Lambda,h,h')$.
	The denseness of $\mathcal{M}_{\rm erg}(\Lambda,h,h')$ in $\mathcal{M}(\Lambda,h,+)$ is obtained since $\varepsilon$ can be chosen arbitrarily.
\end{proof}

Now we are ready to prove Proposition~\ref{Pro:generic-entropy}.
\begin{proof}[Proof of Proposition~\ref{Pro:generic-entropy}]
	Given $h\in[0,h_{\rm top}(\Lambda))$, the set $\mathcal{M}(\Lambda,h,+)$ is non-empty by the variational principle for entropy.
	By~\cite[Theorem A]{PYY}, the entropy map 
	\[h\colon \mathcal{M}_{\rm inv}(\Lambda)\rightarrow \mathbb{R},~~~~h\colon \mu\mapsto h_{\mu}(X)\]
	is upper semi-continuous because of the singular hyperbolicity.
	As a consequence, the set $\mathcal{M}(\Lambda,h,+)$ forms a compact metric subspace of $\mathcal{M}_{\rm inv}(\Lambda)$, which is a Baire space.
	On the other hand, the set $\mathcal{M}_{\rm erg}(\Lambda)$ forms a $G_\delta$ subset of $\mathcal{M}_{\rm inv}(\Lambda)$ by~\cite[Proposition 5.1]{abc}, and thus is also a Baire space.
	\begin{claim}\label{Claim:residual}
		For any $h'>h$, the set $\mathcal{M}_{\rm erg}(\Lambda,h,h')$ forms a residual subset in $\mathcal{M}(\Lambda,h,+)$.
	\end{claim}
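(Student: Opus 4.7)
The plan is to show that $\mathcal{M}_{\rm erg}(\Lambda,h,h')$ is both a dense subset and a $G_\delta$ subset of the Baire space $\mathcal{M}(\Lambda,h,+)$, from which residuality is immediate. Density is already available: it is exactly the conclusion of Lemma~\ref{Lem:generic-entropy}. So the substance is the $G_\delta$ part, which I would extract from the two structural inputs already recorded just above the claim, namely the upper semicontinuity of the entropy map (from~\cite{PYY}) and the fact that $\mathcal{M}_{\rm erg}(\Lambda)$ is a $G_\delta$ subset of $\mathcal{M}_{\rm inv}(\Lambda)$ (from~\cite{abc}).

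First I would write
\[
\mathcal{M}_{\rm erg}(\Lambda,h,h')\;=\;\mathcal{M}_{\rm erg}(\Lambda)\,\cap\,\{\mu\in\mathcal{M}_{\rm inv}(\Lambda)\colon h_\mu(X)<h'\}\,\cap\,\mathcal{M}(\Lambda,h,+),
\]
valid because on $\mathcal{M}(\Lambda,h,+)$ the lower bound $h_\mu(X)\geq h$ is automatic. Since $\mathcal{M}_{\rm erg}(\Lambda)$ is $G_\delta$ in $\mathcal{M}_{\rm inv}(\Lambda)$, its trace on the closed subspace $\mathcal{M}(\Lambda,h,+)$ is $G_\delta$ there. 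By upper semicontinuity of $\mu\mapsto h_\mu(X)$, the sublevel set $\{h_\mu(X)<h'\}$ is open in $\mathcal{M}_{\rm inv}(\Lambda)$, hence also open when intersected with $\mathcal{M}(\Lambda,h,+)$. Therefore $\mathcal{M}_{\rm erg}(\Lambda,h,h')$ is an intersection of a $G_\delta$ set and an open set inside $\mathcal{M}(\Lambda,h,+)$, which is itself $G_\delta$.

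Now $\mathcal{M}(\Lambda,h,+)$ is compact (again by upper semicontinuity of entropy, the set is closed in the compact space $\mathcal{M}_{\rm inv}(\Lambda)$), so it is a Baire space. Combining the $G_\delta$ property just established with the density supplied by Lemma~\ref{Lem:generic-entropy}, the Baire category theorem yields that $\mathcal{M}_{\rm erg}(\Lambda,h,h')$ is residual in $\mathcal{M}(\Lambda,h,+)$, which is the claim.

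I do not expect a genuine obstacle here; the argument is a straightforward Baire-category packaging of two items that have already been isolated in the paper. The only subtle point worth stating clearly in the write-up is that one should work \emph{relative to} $\mathcal{M}(\Lambda,h,+)$ throughout (rather than the full $\mathcal{M}_{\rm inv}(\Lambda)$), since $\mathcal{M}_{\rm erg}(\Lambda,h,h')$ is certainly not dense in $\mathcal{M}_{\rm inv}(\Lambda)$ when $h>0$; the compact subspace $\mathcal{M}(\Lambda,h,+)$ is what makes both density (Lemma~\ref{Lem:generic-entropy}) and Baire category applicable.
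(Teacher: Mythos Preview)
Your proposal is correct and follows essentially the same approach as the paper: both show $\mathcal{M}_{\rm erg}(\Lambda,h,h')$ is a dense $G_\delta$ in the compact (hence Baire) space $\mathcal{M}(\Lambda,h,+)$, using upper semicontinuity of the entropy map to get openness of $\{h_\mu(X)<h'\}$, the $G_\delta$ nature of $\mathcal{M}_{\rm erg}(\Lambda)$, and Lemma~\ref{Lem:generic-entropy} for density. The only cosmetic difference is that the paper phrases the open set as $\mathcal{M}(\Lambda,h,+)\setminus\mathcal{M}(\Lambda,h',+)$ rather than $\{h_\mu(X)<h'\}\cap\mathcal{M}(\Lambda,h,+)$, which is the same thing.
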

	\begin{proof}[Proof of Claim~\ref{Claim:residual}]
		Firstly, by the upper semi-continuity of the entropy map, the set
		\[\mathcal{M}_{\rm inv}(\Lambda,h,h')=\left\{\mu\in \mathcal{M}_{\rm inv}(\Lambda)\colon h\leq h_\mu(X)<h'\right\}= \mathcal{M}(\Lambda,h,+)\setminus \mathcal{M}(\Lambda,h',+)\]
		is relatively open in $\mathcal{M}(\Lambda,h,+)$. 
		Thus 
		\[\mathcal{M}_{\rm erg}(\Lambda,h,h')=\mathcal{M}_{\rm inv}(\Lambda,h,h')\cap \mathcal{M}(\Lambda,h,+)\cap \mathcal{M}_{\rm erg}(\Lambda)\]
		 forms a $G_\delta$ subset of $\mathcal{M}(\Lambda,h,+)$.		
		On the other hand, by Lemma~\ref{Lem:generic-entropy}, the set $\mathcal{M}_{\rm erg}(\Lambda,h,h')$ is dense in $\mathcal{M}(\Lambda,h,+)$.
		Thus $\mathcal{M}_{\rm erg}(\Lambda,h,h')$ is  residual  in $\mathcal{M}(\Lambda,h,+)$.
	\end{proof}
    Note that
    \[\mathcal{M}_{\rm erg}(\Lambda,h)=\bigcap_{k\geq 1} \mathcal{M}_{\rm erg}(\Lambda,h,h+1/k).\]
    By Claim~\ref{Claim:residual}, one has that $\mathcal{M}_{\rm erg}(\Lambda,h)$ forms a residual subset in $\mathcal{M}(\Lambda,h,+)$ which proves Proposition~\ref{Pro:generic-entropy}.    
\end{proof}

\subsection{Ergodic measures with intermediate pressure}

This section devotes to proving the following proposition, which is key to the proof of Proposition~\ref{Thm:main-intermediate-pressure} in  Section~\ref{Section:proof-Theorem-3.1}.
\begin{proposition}\label{Prop:generic-pressure}
	Let $X\in\mathscr{X}^1(M)$ and $\Lambda$ be a non-trivial singular hyperbolic homoclinic class of $X$ satisfying Property (H). Assume moreover that $\overline{\mathcal{M}_1(\Lambda)}=\mathcal{M}_{\rm inv}(\Lambda)$.
	Given  $\varphi\in C(\Lambda, \mathbb{R})$ and $P\in (P_{\inf}(X,\varphi),P_{\rm top}(X,\varphi))$, 	 for any $P'>P$, the set
	\[\mathcal{P}_{\rm erg}(X,\varphi,P,P')\colon=\left\{\mu\in\mathcal{M}_{\rm erg}(\Lambda)\colon \int \varphi {\rm d}\mu \leq P\leq P_{\mu}(X,\varphi)<P'\right\}\]
	is dense in the set
	\[\mathcal{P}(X,\varphi,P,+)\colon=\left\{\mu\in\mathcal{M}_{\rm inv}(\Lambda)\colon \int \varphi {\rm d}\mu \leq P\leq P_{\mu}(X,\varphi)\right\}.\]
\end{proposition}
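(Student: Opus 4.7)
\textbf{Proof plan for Proposition~\ref{Prop:generic-pressure}.}

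The plan is to start from an arbitrary $\mu\in\mathcal{P}(X,\varphi,P,+)$ and, inside any prescribed weak-$\ast$ neighborhood of $\mu$, produce an ergodic $\nu'$ with $\int\varphi\,d\nu'\le P\le P_{\nu'}(X,\varphi)<P'$. I follow three successive steps: (i) perturb $\mu$ by a small amount of a carefully chosen reference measure $\omega$ so that both the integral and pressure inequalities become strict; (ii) pass to a nearby ergodic measure $\nu$ using the entropy-dense property (Lemma~\ref{Lem:entropy-dense}), which inherits the strict inequalities; (iii) if $P_{\nu}(X,\varphi)$ overshoots $P'$, slightly lower the entropy within a smaller neighborhood of $\nu$ via Lemma~\ref{Lem:dense-intermediate-entropy} to pull the pressure back into $[P,P')$.

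The preparatory step is to exhibit an ergodic $\omega\in\mathcal{M}_{\rm erg}(\Lambda)$ with $\int\varphi\,d\omega<P<P_{\omega}(X,\varphi)$. Since $P_{\rm top}(X,\varphi)>P$ and $P_{\inf}(X,\varphi)<P$, ergodic decomposition yields ergodic $\omega_{+}$ with $P_{\omega_{+}}>P$ and ergodic $\omega_{-}$ with $P_{\omega_{-}}<P$, whence $\int\varphi\,d\omega_{-}\le P_{\omega_{-}}<P$. If $\int\varphi\,d\omega_{+}<P$, set $\omega=\omega_{+}$. Otherwise, mixing $\omega_{+}$ with any positive-entropy ergodic measure (which exists because $h_{\rm top}(\Lambda)>0$) and invoking Lemma~\ref{Lem:entropy-dense}, we may further assume $h_{\omega_{+}}>0$. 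Along the segment $\mu_{\lambda}=\lambda\omega_{+}+(1-\lambda)\omega_{-}$, affinity of $h$ and of $\int\varphi\,d\cdot$ shows that the threshold values $\lambda_{I}^{\ast}=\frac{P-I(\omega_{-})}{I(\omega_{+})-I(\omega_{-})}$ and $\lambda_{P}^{\ast}=\frac{P-P_{\omega_{-}}}{P_{\omega_{+}}-P_{\omega_{-}}}$ satisfy $\lambda_{P}^{\ast}<\lambda_{I}^{\ast}$, because the sign of their difference reduces to that of $(P-I(\omega_{-}))h_{\omega_{+}}+(I(\omega_{+})-P)h_{\omega_{-}}$, which is strictly positive under $h_{\omega_{+}}>0$, $I(\omega_{-})<P\le I(\omega_{+})$. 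Any $\lambda$ in the intervening open interval then gives an invariant measure with $I<P<P_{\cdot}$, and one more application of Lemma~\ref{Lem:entropy-dense} upgrades it to the required ergodic $\omega$.

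With such $\omega$ fixed, given $\epsilon>0$ choose $\theta\in(0,1)$ so small that $\mu_{\theta}=\theta\omega+(1-\theta)\mu$ satisfies $d^{\ast}(\mu_{\theta},\mu)<\epsilon/3$; by affinity one has $\int\varphi\,d\mu_{\theta}<P$ and $P_{\mu_{\theta}}>P$ strictly. Lemma~\ref{Lem:entropy-dense} applied to $\mu_{\theta}$ produces an ergodic $\nu$ with $d^{\ast}(\nu,\mu_{\theta})<\epsilon/3$ and $h_{\nu}>h_{\mu_{\theta}}-\eta$; for $\eta$ small, continuity of $\int\varphi\,d\cdot$ preserves $\int\varphi\,d\nu<P$ and $P_{\nu}>P$. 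If $P_{\nu}<P'$, take $\nu'=\nu$ and stop. Otherwise $h_{\nu}\ge P'-\int\varphi\,d\nu>0$, so the interval $\bigl(P-\int\varphi\,d\nu,\min\{P'-\int\varphi\,d\nu,h_{\nu}\}\bigr)$ is non-empty; pick any $h$ inside it and invoke Lemma~\ref{Lem:dense-intermediate-entropy} to obtain an ergodic $\nu'$ with $d^{\ast}(\nu',\nu)<\epsilon/3$ and $h_{\nu'}=h$. For $\nu'$ taken close enough to $\nu$, continuity of the integral preserves $\int\varphi\,d\nu'<P$, while $h_{\nu'}=h\in[P-\int\varphi\,d\nu',P'-\int\varphi\,d\nu')$ gives $P\le P_{\nu'}<P'$. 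Hence $\nu'\in\mathcal{P}_{\rm erg}(X,\varphi,P,P')$ with $d^{\ast}(\nu',\mu)<\epsilon$.

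The principal obstacle is the preparatory construction of $\omega$: one must simultaneously control the integral and the pressure along an explicit convex line of invariant measures and verify that the two separate level-$P$ crossings admit a common interval in between. This leverages both hypotheses $P_{\inf}<P<P_{\rm top}$ in an essential way, together with the entropy-dense approximation to upgrade invariant measures to ergodic ones with the required strict inequalities. The rest of the argument is a careful but routine ordering of the small parameters $\theta,\eta$ and of the neighborhood sizes in the two successive ergodic approximations so that all estimates close up simultaneously.
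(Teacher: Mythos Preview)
Your proof is correct and uses the same underlying ingredients as the paper (the entropy-dense Lemma~\ref{Lem:entropy-dense} and the intermediate-entropy Lemma~\ref{Lem:dense-intermediate-entropy}), but organizes them differently. The paper proceeds by a four-case analysis on whether the inequalities $\int\varphi\,d\mu\le P$ and $P\le P_\mu(X,\varphi)$ are strict, reducing each boundary case to the interior one by mixing with ad-hoc auxiliary measures $\mu_1,\mu_2$; in the interior case it invokes Proposition~\ref{Pro:generic-entropy} (residuality of $\mathcal{M}_{\rm erg}(\Lambda,h)$ in $\mathcal{M}(\Lambda,h,+)$) to hit a precomputed entropy level $h$ in one shot. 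Your route instead constructs a single reference measure $\omega$ with $\int\varphi\,d\omega<P<P_\omega$ in advance, which absorbs the whole case distinction into one convex perturbation; you then apply the two lemmas directly (first entropy-dense to go ergodic, then intermediate-entropy to trim $P_\nu$ below $P'$), bypassing Proposition~\ref{Pro:generic-entropy}. Your organization is arguably cleaner, while the paper's use of Proposition~\ref{Pro:generic-entropy} has the advantage that it lands in $[P,P')$ in a single step rather than two.

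Two minor points worth tightening in a full write-up: (a) your measure $\omega$ need not be ergodic for Step~(i), so the final application of Lemma~\ref{Lem:entropy-dense} in the preparatory paragraph is superfluous; (b) in Step~(iii) you should make explicit that $h$ is chosen with a fixed margin inside $(P-\int\varphi\,d\nu,\,P'-\int\varphi\,d\nu)$ \emph{before} invoking Lemma~\ref{Lem:dense-intermediate-entropy}, so that the subsequent continuity bound $|I(\nu')-I(\nu)|<\delta$ suffices to give $h\in[P-\int\varphi\,d\nu',\,P'-\int\varphi\,d\nu')$. Both are routine, but as written the ordering of quantifiers in (iii) is slightly ambiguous.
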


\begin{proof}
	Given $\mu\in \mathcal{P}(X,\varphi,P,+)$, i.e. $\int \varphi {\rm d}\mu \leq P\leq P_{\mu}(X,\varphi)\}$ and given $\eta>0$, we will show that there exists $\nu\in \mathcal{P}_{\rm erg}(X,\varphi,P,P')$ such that $d^*(\nu,\mu)<\eta$.
	We consider the following cases.
	
	\paragraph{Case I: $P< P_{\mu}(X,\varphi)$.}
	We consider two sub-cases whether $\int \varphi {\rm d}\mu<P$ or $\int \varphi {\rm d}\mu=P$.
	
	\paragraph{Subcase I.1: $\int \varphi {\rm d}\mu<P< P_{\mu}(X,\varphi)$.}
	We point out that this is the essential case to prove while the other cases would be reduced to this case.
	Let 
	\[a=\frac{1}{4}\min\left\{P-\int\varphi {\rm d}\mu,~~P_{\mu}(X,\varphi)-P,~~P'-P\right\}>0.\]
	Take $\eta_0\in(0,\eta)$ such that for any $\omega\in\mathcal{M}_{\rm inv}(\Lambda)$ satisfying $d^*(\omega,\mu)<\eta_0$, one has 
	\[-a<\int \varphi {\rm d}\omega-\int \varphi {\rm d}\mu<a.\]
	Let
	\[h=\min\left\{P_{\mu}(X,\varphi),~~P' \right\}-\int\varphi {\rm d}\mu-a.\]
	Then one has
	\[h=\left(\min\left\{P_{\mu}(X,\varphi),~~P' \right\}-P\right)+\left(P-\int\varphi {\rm d}\mu\right)-a\geq 8a-a>0 \text{~~~and}\]
	\[h<P_{\mu}(X,\varphi)-\int\varphi {\rm d}\mu=h_\mu(X),\]
	which implies $\mu\in \mathcal{M}(X,h,+)$, where recall that 
	\[\mathcal{M}(\Lambda,h,+)=\left\{\mu\in \mathcal{M}_{\rm inv}(\Lambda)\colon h_\mu(X)\geq h\right\}.\]
	By Proposition~\ref{Pro:generic-entropy}, there exists $\nu\in\mathcal{M}_{\rm erg}(\Lambda,h)$, i.e. $\nu\in\mathcal{M}_{\rm erg}(\Lambda)$ and $h_\nu(X)=h$ such that $d^*(\nu,\mu)<\eta_0<\eta$, thus we have
	\[\int \varphi {\rm d}\mu-a<\int \varphi {\rm d}\nu<\int \varphi {\rm d}\mu+a<P.\]
	On the other hand, one has
	\begin{align*}
		P_\nu(X,\varphi)&=h_{\nu}(X)+\int \varphi {\rm d}\nu\\
		&=h+\int \varphi {\rm d}\nu\\
		&=\min\left\{P_{\mu}(X,\varphi),~~P' \right\}-\left(\int\varphi {\rm d}\mu+a-\int \varphi {\rm d}\nu\right)\\
		&<P';
	\end{align*}
    and
    \begin{align*}
    	P_\nu(X,\varphi)&=\min\left\{P_{\mu}(X,\varphi),~~P' \right\}-\int\varphi {\rm d}\mu-a+\int \varphi {\rm d}\nu\\
    	&=\min\left\{P_{\mu}(X,\varphi),~~P' \right\}-\left(\int\varphi {\rm d}\mu-\int \varphi {\rm d}\nu\right)-a\\
    	&>\min\left\{P_{\mu}(X,\varphi),~~P' \right\}-2a\\
    	&>P.
    \end{align*}
	Thus one has 
	\[\int \varphi {\rm d}\nu<P<P_\nu(X,\varphi)<P'\]
	which implies $\nu\in\mathcal{P}_{\rm erg}(X,\varphi,P,P')$.
	Moreover, recall that $\nu$ is chosen to satisfy $d^*(\nu,\mu)<\eta$, thus $\nu$ is what we need in this subcase.
	
	\paragraph{Subcase I.2: $\int \varphi {\rm d}\mu=P< P_{\mu}(X,\varphi)$.}
	Since $P>P_{\inf}(X,\varphi)$, one can take $\mu_1\in\mathcal{M}_{\rm inv}(\Lambda)$ such that $P_{\mu_1}(X,\varphi)<P$. Thus for $\theta\in(0,1)$, the measure $\mu_\theta=\theta \cdot\mu_1+(1-\theta)\cdot\mu$ satisfies
	\[\lim_{\theta\rightarrow 0^+} \mu_\theta=\mu, \text{~~and~~} P_{\mu_\theta}(X,\varphi)=\theta \cdot P_{\mu_1}(X,\varphi)+(1-\theta)\cdot P_{\mu}(X,\varphi).\]
	Thus one can take $\theta_1\in (0,1)$ small enough such that the measure $\mu_{\theta_1}$ satisfies
	\[d^*(\mu_{\theta_1},\mu)<\eta/2  \text{~~and ~~} P_{\mu_{\theta_1}}(X,\varphi)>P.\]
	On the other hand, since $\int \varphi {\rm d}\mu=P$ and $\int \varphi {\rm d}\mu_1\leq P_{\mu_1}(X,\varphi)<P$, thus one has
	\[\int \varphi {\rm d}\mu_{\theta_1}=\theta_1 \cdot \int \varphi {\rm d}\mu_1+(1-\theta_1)\cdot \int \varphi {\rm d}\mu<P.\]
	The above estimates means that 
	\[\int \varphi {\rm d}\mu_{\theta_1}<P<P_{\mu_{\theta_1}}(X,\varphi).\]
	This implies that the measure $\mu_{\theta_1}$ satisfies the assumptions in Subcase I.1.
	Thus for the measure $\mu_{\theta_1}$, there exists $\nu\in \mathcal{P}_{\rm erg}(X,\varphi,P,P')$ such that $d^*(\nu,\mu_{\theta_1})<\eta/2$.
	As a consequence, the measure $\nu\in \mathcal{P}_{\rm erg}(X,\varphi,P,P')$ satisfies
	\[d^*(\nu,\mu)\leq d^*(\nu,\mu_{\theta_1})+d^*(\mu_{\theta_1},\mu)<\eta.\]

	\paragraph{Case II: $P= P_{\mu}(X,\varphi)$.}
	Firstly, since $P\in(P_{\inf}(X,\varphi),P_{\rm top}(X,\varphi))$, there exist $\mu_2\in\mathcal{M}_{\rm inv}(\Lambda)$ such that $P_{\mu_2}(X,\varphi)>P=P_{\mu}(X,\varphi)$.
	We also consider the sub-cases whether $\int \varphi {\rm d}\mu<P$ or $\int \varphi {\rm d}\mu=P$.
	
	\paragraph{Subcase II.1: $\int \varphi {\rm d}\mu<P= P_{\mu}(X,\varphi)$.}
	Similarly as in Subcase I.2, when $\theta_2\in(0,1)$ is small enough, the measure
	$\mu_{\theta_2}=\theta_2\cdot\mu_2+(1-\theta_2)\cdot\mu$ satisfies
	\[d^*(\mu_{\theta_2},\mu)<\eta/2 \text{~~and~~} \int \varphi {\rm d}\mu_{\theta_2}<P.\]
	On the other hand,
	\[P_{\mu_{\theta_2}}(X,\varphi)=\theta_2 \cdot P_{\mu_2}(X,\varphi)+(1-\theta_2)\cdot P_{\mu}(X,\varphi)>P\]
	Thus $\mu_{\theta_2}$  satisfies the assumption as in Subcase 1.1 that
	\[\int \varphi {\rm d}\mu_{\theta_2}<P<P_{\mu_{\theta_2}}(X,\varphi).\]
	Then there exists $\nu\in \mathcal{P}_{\rm erg}(X,\varphi,P,P')$ such that $d^*(\nu,\mu_{\theta_2})<\eta/2$.
	As a consequence, 
    \[d^*(\nu,\mu)\leq d^*(\nu,\mu_{\theta_2})+d^*(\mu_{\theta_2},\mu)<\eta.\]

	\paragraph{Subcase II.2: $\int \varphi {\rm d}\mu=P= P_{\mu}(X,\varphi)$.}
	For the measure $\mu_2\in\mathcal{M}_{\rm inv}(\Lambda)$ with $P_{\mu_2}(X,\varphi)>P=P_{\mu}(X,\varphi)$,
	we consider two cases whether $\int \varphi {\rm d}\mu_2\leq P$ or $\int \varphi {\rm d}\mu_2> P$.
	\medskip
	
	When  $\displaystyle\int \varphi {\rm d}\mu_2\leq P$, by taking $\theta_2\in(0,1)$  small enough, the measure
	$\mu_{\theta_2}=\theta_2\cdot\mu_2+(1-\theta_2)\cdot\mu$ satisfies
	\[d^*(\mu_{\theta_2},\mu)<\eta/2  \text{~~and~~} \int \varphi {\rm d}\mu_{\theta_2}\leq P<P_{\mu_{\theta_2}}(X,\varphi)\]
	Then $\mu_{\theta_2}$ satisfies the assumptions in Case I, thus there there exists $\nu\in \mathcal{P}_{\rm erg}(X,\varphi,P,P')$ such that $d^*(\nu,\mu_{\theta_2})<\eta/2$.
	As a consequence, 
	$d^*(\nu,\mu)\leq d^*(\nu,\mu_{\theta_2})+d^*(\mu_{\theta_2},\mu)<\eta.$
	\medskip
	
	The last case is when $\displaystyle\int \varphi {\rm d}\mu_2> P$. Take $\mu_1\in\mathcal{M}_{\rm inv}(\Lambda)$ such that $\displaystyle\int\varphi {\rm d}\mu_1\leq P_{\mu_1}(X,\varphi)<P$ as in Subcase I.2. 
	One can assume that $\displaystyle\int\varphi {\rm d}\mu_1< P_{\mu_1}(X,\varphi)$ which is equivalent to $h_{\mu_1}(X)>0$.
	Otherwise $\displaystyle\int\varphi {\rm d}\mu_1= P_{\mu_1}(X,\varphi)$ which is equivalent to $h_{\mu_1}(X)=0$. One takes $\mu'\in\mathcal{M}_{\rm inv}(\Lambda)$ such that $h_{\mu'}(X)>0$. (Such $\mu'$ must exist because $\Lambda$ is a non-trivial homoclinic class and thus $h_{\rm top}(\Lambda)>0$.)
	Then when $t\in(0,1)$ is small enough, the measure 
	\[\mu_t=t\cdot \mu'+(1-t)\mu_1 \text{~~would satisfy~~}
	\displaystyle\int\varphi {\rm d}\mu_t< P_{\mu_t}(X,\varphi)<P. \]
	As $\displaystyle\int\varphi {\rm d}\mu_1< P<\displaystyle\int\varphi {\rm d}\mu_2$, there exists $\mu_3$ which is a non-trivial convex sum of $\mu_1$ and $\mu_2$ such that $\displaystyle\int\varphi {\rm d}\mu_3= P$ and $h_{\mu_3}(X)>0$ (because $h_{\mu_1}(X)>0$ and $h_{\mu_2}(X)\geq 0$).
	Thus $\mu_3$ satisfies 
	\[\displaystyle\int\varphi {\rm d}\mu_3= P<P_{\mu_3}(X,\varphi).\]
	As before, consider the convex sum $\mu_\theta=\theta\cdot \mu_3+(1-\theta)\cdot\mu$ for $\theta\in(0,1)$. When $\theta_3\in(0,1)$ is small enough, the measure $\mu_{\theta_3}$ satisfies
	\[d^*(\mu_{\theta_3},\mu)<\eta/2 \qquad\text{~and~}\qquad \int\varphi {\rm d}\mu_{\theta_3}= P<P_{\mu_{\theta_3}}(X,\varphi).\]
	By Subcase I.2, there exists $\nu\in \mathcal{P}_{\rm erg}(X,\varphi,P,P')$ such that $d^*(\nu,\mu_{\theta_3})<\eta/2$.
	As a consequence, one has
	\[d^*(\nu,\mu)\leq d^*(\nu,\mu_{\theta_3})+d^*(\mu_{\theta_3},\mu)<\eta.\]
		The proof is completed.
\end{proof}

\subsection{Proof of Theorem~\ref{Thm:main-intermediate-pressure}}\label{Section:proof-Theorem-3.1}
Applying Proposition~\ref{Prop:generic-pressure}, one manages to prove Theorem~\ref{Thm:main-intermediate-pressure} and thus the generic part of  Theorem~\ref{Thm:Lorenz} \&~\ref{Thm:SH-attractor} would be proved. 
\begin{proof}[Proof of Theorem~\ref{Thm:main-intermediate-pressure}]
	Let $X,\Lambda,\varphi$ be as in Theorem~\ref{Thm:main-intermediate-pressure}. Take $P\in(P_{\inf}(X,\varphi),P_{\rm top}(X,\varphi))$.
	Then there exists $\mu_1,\mu_2\in\mathcal{M}_{\rm inv}(\Lambda)$ such that 
	\[P_{\mu_1}(X,\varphi)<P<P_{\mu_2}(X,\varphi).\]
	Since the pressure map $\mu\mapsto P_{\mu}(X,\varphi)$ is affine, there exists $\theta\in(0,1)$ such that the measure
	$\mu=\theta\cdot\mu_1+(1-\theta)\cdot\mu_2$ satisfies 
	\[P_{\mu}(X,\varphi)=\int \varphi {\rm d}\mu+h_{\mu}(X)=P.\]
	Thus 
	\[\int \varphi {\rm d}\mu=P-h_{\mu}(X)\leq P=P_{\mu}(X,\varphi),\]
	which shows that $\mu\in \mathcal{P}(X,P,+)$ and as a consequence $\mathcal{P}(X,P,+)\neq\emptyset$.
	On the other hand
	\[\mathcal{P}(X,\varphi,P,+)=\left\{\mu\in\mathcal{M}_{\rm inv}(\Lambda)\colon \int \varphi {\rm d}\mu\leq P\right\}\bigcap \left\{\mu\in\mathcal{M}_{\rm inv}(\Lambda)\colon P_{\mu}(X,\varphi)\geq P\right\} \]
	is closed because the map $\mu\mapsto \int \varphi {\rm d}\mu$ is continuous and the pressure map is upper semi-continuous by~\cite[Theorem A]{PYY}. Thus $\mathcal{P}(X,\varphi,P,+)$ is a non-empty compact subset of the Baire space $\mathcal{M}_{\rm inv}(\Lambda)$.
	Theorem~\ref{Thm:main-intermediate-pressure} is essentially proved by the following claim.
	\begin{claim}\label{Claim:generic-pressure}
		For any $P'>P$, the set $\mathcal{P}_{\rm erg}(X,\varphi,P,P')$ forms a residual subset in $\mathcal{P}(X,\varphi,P,+)$.
	\end{claim}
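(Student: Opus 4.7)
The plan is to mimic the argument used for Claim~\ref{Claim:residual}: I would exhibit $\mathcal{P}_{\rm erg}(X,\varphi,P,P')$ as the intersection, inside $\mathcal{P}(X,\varphi,P,+)$, of a relatively open set with a $G_\delta$ set, and then invoke the density statement of Proposition~\ref{Prop:generic-pressure} to conclude residuality via the Baire category theorem.

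First, I would write the tautological identity
\[
\mathcal{P}_{\rm erg}(X,\varphi,P,P') = \mathcal{P}(X,\varphi,P,+) \cap \{\mu \in \mathcal{M}_{\rm inv}(\Lambda) : P_\mu(X,\varphi) < P'\} \cap \mathcal{M}_{\rm erg}(\Lambda).
\]
Because the entropy map is upper semi-continuous by~\cite[Theorem A]{PYY} and $\mu \mapsto \int \varphi \,{\rm d}\mu$ is continuous, the pressure map $\mu \mapsto P_\mu(X,\varphi) = h_\mu(X) + \int \varphi \,{\rm d}\mu$ is upper semi-continuous on $\mathcal{M}_{\rm inv}(\Lambda)$. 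Hence $\{\mu : P_\mu(X,\varphi) < P'\}$ is open in $\mathcal{M}_{\rm inv}(\Lambda)$, and therefore relatively open in $\mathcal{P}(X,\varphi,P,+)$. Combining this with the fact that $\mathcal{M}_{\rm erg}(\Lambda)$ is a $G_\delta$ subset of $\mathcal{M}_{\rm inv}(\Lambda)$ by~\cite[Proposition 5.1]{abc}, I would conclude that $\mathcal{P}_{\rm erg}(X,\varphi,P,P')$ is the intersection of a relatively open set with a relatively $G_\delta$ set in $\mathcal{P}(X,\varphi,P,+)$, hence is itself $G_\delta$ there.

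Next, I would apply Proposition~\ref{Prop:generic-pressure} directly to get density of $\mathcal{P}_{\rm erg}(X,\varphi,P,P')$ in $\mathcal{P}(X,\varphi,P,+)$. Since $\mathcal{P}(X,\varphi,P,+)$ has already been shown (in the lines preceding the claim) to be a non-empty compact metric space, it is a Baire space, so a dense $G_\delta$ subset is residual, proving the claim.

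The genuine obstacle lies not in the claim but in Proposition~\ref{Prop:generic-pressure}, whose proof handles the delicate two-sided constraint $\int \varphi \,{\rm d}\mu \leq P \leq P_\mu(X,\varphi)$ by reducing each configuration to the regime $\int \varphi \,{\rm d}\mu < P < P_\mu(X,\varphi)$ and then applying the intermediate-entropy Proposition~\ref{Pro:generic-entropy}. Given that density, the present claim is a short Baire-category wrap-up; the only care required is to observe that relative openness and the $G_\delta$ property are preserved upon restriction to the compact subspace $\mathcal{P}(X,\varphi,P,+)$, which is immediate.
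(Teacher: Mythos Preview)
Your proposal is correct and follows essentially the same approach as the paper: you write $\mathcal{P}_{\rm erg}(X,\varphi,P,P')$ as the intersection of the relatively open set $\{\mu:P_\mu(X,\varphi)<P'\}$ with the $G_\delta$ set $\mathcal{M}_{\rm erg}(\Lambda)$ inside $\mathcal{P}(X,\varphi,P,+)$, and then invoke Proposition~\ref{Prop:generic-pressure} for density. The paper phrases the open set as $\mathcal{P}(X,\varphi,P,+)\setminus\mathcal{P}(X,\varphi,P',+)$, but since any $\mu\in\mathcal{P}(X,\varphi,P,+)$ already satisfies $\int\varphi\,{\rm d}\mu\le P<P'$, this set difference coincides exactly with your $\{P_\mu(X,\varphi)<P'\}$; the two decompositions are identical.
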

    \begin{proof}[Proof of Claim~\ref{Claim:generic-pressure}]
    	Note that $\mathcal{P}(X,\varphi,P,+)\setminus \mathcal{P}(X,\varphi,P',+)$ forms a relatively open  set in $\mathcal{P}(X,\varphi,P,+)$ and  $\mathcal{M}_{\rm erg}(\Lambda)$ is a $G_\delta$ subset of $\mathcal{M}_{\rm inv}(\Lambda)$. Thus the set
    	\[\mathcal{P}_{\rm erg}(X,\varphi,P,P')=\mathcal{M}_{\rm erg}(\Lambda)\cap \left(\mathcal{P}(X,\varphi,P,+)\setminus \mathcal{P}(X,\varphi,P',+)\right)\]
    	is a $G_\delta$ subset of $\mathcal{P}(X,\varphi,P,+)$.
    	
    	On the other hand, Proposition~\ref{Prop:generic-pressure} shows $\mathcal{P}_{\rm erg}(X,\varphi,P,P')$ is dense in $\mathcal{P}(X,\varphi,P,+)$.
    	Thus $\mathcal{P}_{\rm erg}(X,\varphi,P,P')$ forms a residual subset in $\mathcal{P}(X,\varphi,P,+)$.
    \end{proof}
    Note that 
   \[ \mathcal{P}_{\rm erg}(X,\varphi,P)=\bigcap_{k\geq 1} \mathcal{P}_{\rm erg}(X,\varphi,P,P+1/k).\]
   Claim~\ref{Claim:generic-pressure} implies that $\mathcal{P}_{\rm erg}(X,\varphi,P)$ forms a residual subset in $\mathcal{P}(X,\varphi,P,+)$.
   This completes the proof of Theorem~\ref{Thm:main-intermediate-pressure}. 
\end{proof}

\section{Dense part of Theorem~\ref{Thm:Lorenz} \&~\ref{Thm:SH-attractor}}

For the dense part of Theorem~\ref{Thm:Lorenz} \&~\ref{Thm:SH-attractor}, one first applies~\cite{STW} that for $C^r(r\geq 2)$-dense Lorenz attractors, the singular measure is isolated in the ergodic measure space. Then one takes a continuous function $\varphi$ such that its value at the singularity is ``much larger'' than elsewhere. This leads that the intermediate pressure property fails for such a $\varphi$.
For completeness, we state the dense part of~\cite[Theorem B~$\&$~B.1]{STW}. 

\begin{theorem}[{Dense part of~\cite[Theorem B~$\&$~B.1]{STW}}]\label{Thm:dense-STW}
	There exists a dense  subset $\mathcal{D}^r$ in $\mathscr{X}^r(M^3)$ for every  $r\in\mathbb{N}_{\geq 2}\cup\{\infty\}$  and a dense subset $\mathcal{D}\in\mathscr{X}^1(M)$ such that if $\Lambda$ is a geometric Lorenz attractor of  $X\in\mathcal{D}^r$ or $\Lambda$ is a singular hyperbolic attractor of  $X\in\mathcal{D}$ with $\dim(E^{cu})=2$, then 
	\[\overline{\mathcal{M}_{\rm per}(\Lambda)}\subsetneqq \overline{\mathcal{M}_{\rm erg}(\Lambda)}\subsetneqq \mathcal{M}_{\rm inv}(\Lambda). \]
	Moreover, the space $\mathcal{M}_{\rm erg}(\Lambda)$ is not connected and
	the singular measure $\delta_\sigma$ is isolated inside $\mathcal{M}_{\rm erg}(\Lambda)$ where $\sigma\in\sing(\Lambda)$.
\end{theorem}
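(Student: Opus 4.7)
The plan is to construct the dense sets by a $C^r$ (resp.\ $C^1$) perturbation that forces $\delta_\sigma$ to become isolated in $\mathcal{M}_{\rm erg}(\Lambda)$, and then to deduce the three listed statements as formal consequences of that isolation. For the perturbation, starting from any $X_0\in\mathscr{X}^r(M^3)$ with a Lorenz attractor and attracting neighbourhood $U$, the skew-product structure of the Poincar\'e return (Definition~\ref{Def:Lorenz}) allows one to modify $X_0$ by a $C^r$-small perturbation $X$, supported near an orbit segment crossing $\Sigma$, so that the one-dimensional quotient $f$ of the new return map satisfies an \emph{isolation gap}
\[
\exists\,0<\alpha<1-\beta<1\colon\quad f\bigl((-\alpha,0)\cup(0,\alpha)\bigr)\;\subseteq\; [-1,-1+\beta]\cup[1-\beta,1].
\]
This is compatible with the boundary behaviour $\lim_{x\to0^\pm}f(x)=\mp1$: one simply straightens $f$ on a neighbourhood of $0$ without altering the rest of the Lorenz structure, and Proposition~\ref{Prop:Lorenz} guarantees the perturbed $X$ still admits a Lorenz attractor $\Lambda$ in $U$. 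Collecting all such $X$ yields $\mathcal{D}^r$; a parallel $C^1$ construction using standard $C^1$ perturbation techniques (and $\dim E^{cu}=2$ to reduce to a one-dimensional quotient along a central section) produces $\mathcal{D}$.

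The core of the proof is to show that every $X\in\mathcal{D}^r$ (or $\mathcal{D}$) has $\delta_\sigma$ isolated in $\mathcal{M}_{\rm erg}(\Lambda)$. The gap condition has a combinatorial consequence: because $f\bigl((-\alpha,\alpha)\setminus\{0\}\bigr)$ is disjoint from $(-\alpha,\alpha)$, two successive $f$-iterates cannot both lie in $(-\alpha,\alpha)$, so every $f$-invariant probability $\mu_P$ satisfies $\mu_P(\{|x|\geq\alpha\})\geq 1/2$. Choose a flow-box $V$ around $\sigma$ whose trace on $\Sigma$ is $\{|x|<\alpha\}$ and a continuous $\Phi\colon\Lambda\to[0,1]$ with $\Phi(\sigma)=0$ and $\Phi\equiv1$ off $V$. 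For any flow-invariant $\mu\in\mathcal{M}_{\rm inv}(\Lambda)\setminus\{\delta_\sigma\}$ the Kakutani--Abramov formula gives
\[
\int\Phi\,d\mu \;=\; \frac{\displaystyle\int_\Sigma \int_0^{\tau(z)}\Phi(X_s(z))\,ds\,d\mu_P(z)}{\displaystyle\int_\Sigma \tau\,d\mu_P},
\]
where $\mu_P$ is the induced return-map invariant measure and $\tau$ is the return time. On $\{|x|\geq\alpha\}$, $\tau$ is uniformly bounded above and below and a definite fraction of each arc lies outside $V$, so the numerator carries a term proportional to $\mu_P(\{|x|\geq\alpha\})\geq 1/2$. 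Using the logarithmic asymptotics $\tau(z)\sim -c\log|x(z)|$ near $\ell$ (from the saddle linearisation), one argues that the contribution of $\{|x|<\alpha\}$ to the denominator cannot swamp this term, yielding a uniform $\int\Phi\,d\mu\geq\epsilon_0>0$. Since $\int\Phi\,d\delta_\sigma=0$, this is the desired weak-$*$ gap.

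The three conclusions then follow formally from isolation. Non-connectedness of $\mathcal{M}_{\rm erg}(\Lambda)$ is immediate, as $\{\delta_\sigma\}$ is clopen. For $\overline{\mathcal{M}_{\rm per}(\Lambda)}\subsetneq\overline{\mathcal{M}_{\rm erg}(\Lambda)}$, note that $\delta_\sigma\in\mathcal{M}_{\rm erg}(\Lambda)\subseteq\overline{\mathcal{M}_{\rm erg}(\Lambda)}$, yet no periodic measure (ergodic and distinct from $\delta_\sigma$, since $\sigma$ is a singularity) lies in the isolation neighbourhood of $\delta_\sigma$, so $\delta_\sigma\notin\overline{\mathcal{M}_{\rm per}(\Lambda)}$. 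For $\overline{\mathcal{M}_{\rm erg}(\Lambda)}\subsetneq\mathcal{M}_{\rm inv}(\Lambda)$, pick any $\nu\in\mathcal{M}_{\rm erg}(\Lambda)\setminus\{\delta_\sigma\}$ (which exists because $h_{\rm top}(\Lambda)>0$) and set $\mu_t=t\delta_\sigma+(1-t)\nu$ for $t$ close to $1$; then $\int\Phi\,d\mu_t=(1-t)\int\Phi\,d\nu\in(0,\epsilon_0)$, while every ergodic measure satisfies either $\int\Phi\,d\mu'=0$ (i.e.\ $\mu'=\delta_\sigma$) or $\int\Phi\,d\mu'\geq\epsilon_0$, so $\mu_t\notin\overline{\mathcal{M}_{\rm erg}(\Lambda)}$.

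The hardest step is the denominator estimate inside the isolation argument: one must confirm that the gap on $f$ genuinely prevents an ergodic measure from piling mass near $0$ while supporting an unboundedly large return time. This requires an a priori bound on $\mu_P(\{|x|<\delta\})$ that is uniform as $\delta\to 0$, coupled with the linearised estimate for $\tau$ near $\ell$. In the higher-dimensional $C^1$ setting, the extra difficulty is to replace quotient-map perturbations by $C^1$ perturbations that preserve singular hyperbolicity while enforcing a two-dimensional substitute for the gap condition on a central section at $\sigma$.
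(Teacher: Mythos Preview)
The paper does not itself prove this statement: it is quoted from \cite{STW}, and the text following the theorem only records that the isolation of $\delta_\sigma$ is implicit in \cite[Remark~1.3 and Section~4.4]{STW}. So there is no in-paper proof to compare against line by line; the relevant question is whether your outline is a viable route.

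It is not, because the ``isolation gap'' you impose on the one-dimensional quotient, namely $f\bigl((-\alpha,0)\cup(0,\alpha)\bigr)\subseteq[-1,-1+\beta]\cup[1-\beta,1]$ with $\alpha<1-\beta$, is satisfied \emph{automatically} by every map in Definition~\ref{Def:Lorenz}. Indeed $\lim_{x\to0^\pm}f(x)=\mp1$ and $f$ is continuous and increasing on each component, so for any fixed $\beta\in(0,1)$ one has $f((0,\alpha))\subset(-1,-1+\beta)$ once $\alpha$ is small enough, and symmetrically on the other side. Hence your perturbation step is vacuous, and your Abramov-formula argument, if valid, would show $\delta_\sigma$ isolated for \emph{every} geometric Lorenz attractor---directly contradicting Theorem~\ref{Thm:generic-STW}, where generically $\overline{\mathcal{M}_{\rm per}(\Lambda)}=\mathcal{M}_{\rm inv}(\Lambda)\ni\delta_\sigma$ and $\delta_\sigma$ is therefore \emph{not} isolated. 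The failure sits exactly where you flag it as hardest: the bound $\mu_P(\{|x|\geq\alpha\})\geq\tfrac12$ does not prevent $\int\tau\,d\mu_P$ from being arbitrarily large, and no uniform decay of $\mu_P(\{|x|<\delta\})$ as $\delta\to0$ follows from your gap. A condition that \emph{does} work is that the forward $f$-orbits of the critical values $\mp1$ remain bounded away from $0$ (equivalently, the unstable separatrices of $\sigma$ do not re-accumulate on $W^s_{\rm loc}(\sigma)$); then every $f$-invariant measure is supported in $\{|x|\geq\delta_0\}$ for a fixed $\delta_0>0$, $\tau$ is uniformly bounded there, and the isolation of $\delta_\sigma$ is immediate. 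This is a genuinely non-generic condition requiring a real perturbation, and it is the kind of condition arranged in \cite{STW}. Your concluding deductions of the three displayed assertions from the isolation of $\delta_\sigma$ are correct once isolation is in hand.
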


We point out that the singular measure $\delta_\sigma$  is isolated inside $\mathcal{M}_{\rm erg}(\Lambda)$ was not originally stated in~\cite[Theorem B~$\&$~B.1]{STW}. 
In fact the strategy of  the dense part of~\cite[Theorem B~$\&$~B.1]{STW} is to kick out the singular measure $\delta_\sigma$  to be isolated in $\mathcal{M}_{\rm erg}(\Lambda)$. 
One can see~\cite[Remark 1.3~$\&$~Section 4.4]{STW} for details.

\begin{proof}[Dense part of Theorem~\ref{Thm:Lorenz} \&~\ref{Thm:SH-attractor}]
    Let $\mathcal{D}^r\subset\mathscr{X}^r(M^3)$ for $r\in\mathbb{N}_{\geq 2}\cup\{\infty\}$ and $\mathcal{D}\in\mathscr{X}^1(M)$ be taken from Theorem~\ref{Thm:dense-STW}. Assume $\Lambda$ is a geometric Lorenz attractor of  $X\in\mathcal{D}^r$ or $\Lambda$ is a singular hyperbolic attractor of  $X\in\mathcal{D}$ with $\dim(E^{cu})=2$, then the singular measure $\delta_\sigma$ is isolated inside $\mathcal{M}_{\rm erg}(\Lambda)$, where $\sigma\in\sing(\Lambda)$. Let $\eta>0$ be a small constant such that the open ball $B(\sigma,2\eta)$ satisfies $\mu(B(\sigma,2\eta))<1/4$ for all $\mu\in\mathcal{M}_{\rm erg}(\Lambda)\setminus \{\delta_\sigma\}$.
	
	Take a constant $L>4\cdot h_{\rm top}(\Lambda)$ and take a continuous function $\varphi\colon\Lambda\rightarrow \mathbb{R}$ satisfying the following properties:
	\begin{itemize}
		\item $\varphi(x)=L$ for every $x\in\Lambda\cap B(\sigma,\eta)$;
		\item $\varphi(x)=0$ for every $x\in\Lambda\setminus B(\sigma,2\eta)$;
		\item $0\leq \varphi(x)\leq L$ for every $x\in\Lambda$.
	\end{itemize}
    For the potential $\varphi$, note first that 
    \[P_{\delta_\sigma}(X, \varphi)=h_{\delta_\sigma}(X)+\int \varphi {\rm d} \delta_\sigma=0+\varphi(\sigma)=L.\]
    On the other hand, for any $\mu\in\mathcal{M}_{\rm erg}(\Lambda)\setminus \{\delta_\sigma\}$,
    \begin{align*}
    	P_{\mu}(X, \varphi)
    	&=h_{\mu}( X)+\int \varphi {\rm d} \mu\\
    	&\leq  h_{\rm top}(\Lambda)+\int_{\Lambda\cap U_2} \varphi {\rm d} \mu+\int_{\Lambda\setminus U_2} \varphi {\rm d} \mu\\
    	&\leq L/4+ L/4+0=L/2\\
    	&<P_{\delta_\sigma}(X, \varphi).
    \end{align*}
    Thus $\Lambda$ does not admit the intermediate pressure property. 
\end{proof}

\begin{remark*}
	We give two remarks here for the dense part of of Theorem~\ref{Thm:Lorenz} \&~\ref{Thm:SH-attractor}.
	\begin{enumerate}
		\item It is obvious that the singular measure $\delta_\sigma$ is the unique equilibrium of the function $\varphi$ constructed in the proof of dense part. Moreover, if a sequence of invariant measures $\{\mu_n\}_{n=1}^{\infty}$ satisfies $h_{\mu_n}(X)+\int \varphi {\rm d}\mu_n\rightarrow P_{\rm top}(X,\varphi)$, then one has $\mu_n\rightarrow\delta_\sigma$ in the weak*-topology. By~\cite[Corollary 4 \& Corollary 8]{Walters-91}, $P_{\rm top}(X,\cdot)$ is Fr\'echet differentiable\footnote{Please refer to \cite{Walters-91} for the definition of Fr\'echet differentiablity.} at $\varphi$.
		
		\item Following the same argument in the dense part, if a dynamical system $(X,f)$ has an isolated ergodic measure $\mu$ such that $\supp(\mu)\neq X$, then $(X,f)$ does not admit the intermediate pressure property.
	\end{enumerate}	
\end{remark*}

\bibliographystyle{plain}

\flushleft{\bf Yi Shi} \\
School of Mathematics, Sichuan University, Chengdu, 610065, China\\
\textit{E-mail:} \texttt{shiyi@scu.edu.cn}\\

\flushleft{\bf Xiaodong Wang} \\
School of Mathematical Sciences,  CMA-Shanghai, Shanghai Jiao Tong University, Shanghai, 200240,  China\\
\textit{E-mail:} \texttt{xdwang1987@sjtu.edu.cn}\\

\end{document}